\documentclass[11pt]{amsart}
\usepackage[pagewise]{lineno}
\usepackage{amssymb,amsmath,latexsym, amscd, graphicx, url}
\newtheorem{theorem}{Theorem}[section]
\newtheorem{lemma}[theorem]{Lemma}
\newtheorem{proposition}[theorem]{Proposition}
\newtheorem{definition}[theorem]{Definition}

\newtheorem{corollary}[theorem]{Corollary}

\theoremstyle{definition}
\newtheorem{remark}[theorem]{Remark}
\newtheorem{example}[theorem]{Example}
\newtheorem{question}[theorem]{Question}

\keywords{Orderable groups, Burns-Hale Theorem}
\subjclass{20F60, 06F15}
\begin{document}

\title{Generalizations of the Burns-Hale Theorem}

\begin{abstract}
The Burns-Hale theorem states that a group $G$ is left-orderable if and only if $G$ is locally projectable onto the class of left-orderable groups.  Similar results have appeared in the literature in the case of UPP groups and Conradian left-orderable groups, with proofs using varied techniques in each case.

This note presents a streamlined approach to showing that if $\mathcal{C}$ is the class of either Conradian left-orderable, left-orderable, or UPP groups, then $\mathcal{C}$ contains all groups that are locally projectable onto $\mathcal{C}$; and shows that this streamlined approach works for the class of diffuse groups as well.  It also includes an investigation of the extent to which a similar theorem can hold for the classes of bi-orderable, circularly orderable or recurrent orderable groups.
\end{abstract}
\author[Adam Clay]{Adam Clay}
\thanks{Adam Clay was partially supported by NSERC grant RGPIN-2014-05465}
\address{Department of Mathematics, 420 Machray Hall, University of Manitoba, Winnipeg, MB, R3T 2N2.}
\email{Adam.Clay@umanitoba.ca}
\urladdr{http://server.math.umanitoba.ca/~claya/}

\maketitle

A group $G$ is called left-orderable (LO) if it admits a strict total ordering $<$ such that $g<h$ implies $fg<fh$ for all $f, g,h \in G$.  Closely related is the notion of a group being bi-orderable (BO), which is when a group admits a left-ordering for which $g<h$ additionally implies $gf<hf$.  In a bi-ordered group, positive elements are conjugation invariant---meaning $id<g$ implies $id<h^{-1}gh$ for all $h \in G$.  By weakening this condition and instead requiring that $id <g$ implies $id<h^{-1}gh^2$ for all positive $h \in G$, we arrive at Conradian left-orderings and Conradian left-orderable groups (CO).\footnote{This paper considers both Conradian orderings and circular orderings, but the notation ``CO" will be reserved for Conradian-orderable groups.}  Every bi-ordering is evidently a Conradian left-ordering. 

There is a fourth type of ordering, called a recurrent ordering (RO), which is a special type of Conradian ordering.  These orderings arise naturally from considering amenable left-orderable groups \cite{Morris06}, and more generally from any group $G$ whose action on the space of left-orderings $\mathrm{LO}(G)$ admits a recurrent point.  A left-ordering $<$ of a group $G$ is called \textit{recurrent} (or recurrent for every cyclic subgroup, as in \cite{Morris06}) if for every $g \in G$ and for every finite sequence of inequalities 
\[ g_1 < g_2 <\ldots < g_n
\]
there exists an increasing sequence $\{n_k \}_{k=1}^{\infty}$ of positive integers such that 
\[ g_1g^{n_k} < g_2 g^{n_k} < \ldots < g_n g^{n_k}
\]
for all $k$.  Every bi-ordering is evidently a recurrent ordering.

Weaker than the notion of left-orderability of a group $G$ is the notion of $G$ having the unique product property (UPP), and being diffuse (see Subsections \ref{UPP} and \ref{diffuse}).  There is a chain of implications
\[ \mbox{BO} \Rightarrow \mbox{recurrent orderable} \Rightarrow \mbox{CO} \Rightarrow \mbox{LO} \Rightarrow \mbox{diffuse} \Rightarrow \mbox{UPP} \Rightarrow \mbox{torsion free}.
\]

An important tool in the study of left-orderable groups is the Burns-Hale theorem \cite{BH72}, which in its classical form is as follows:
\begin{theorem}
\label{original BH}
A group $G$ is left-orderable if and only if for every nontrivial finitely generated subgroup $H$ of $G$ there exists a homomorphism $H \rightarrow L$ onto a nontrivial left-orderable group $L$.
\end{theorem}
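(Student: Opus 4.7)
The forward direction is immediate: if $G$ is left-orderable, then so is every subgroup, and we take the identity homomorphism $H \to H$.

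For the converse, my plan is to use the standard finite-configuration characterization of left-orderability: $G$ is left-orderable if and only if for every finite subset $\{g_1, \ldots, g_n\} \subseteq G \setminus \{e\}$ there exist signs $\epsilon_i \in \{\pm 1\}$ such that the subsemigroup generated by $g_1^{\epsilon_1}, \ldots, g_n^{\epsilon_n}$ does not contain the identity. A compactness argument (or, equivalently, Zorn's lemma applied to partial positive cones) then glues these local sign choices into a global positive cone. So the whole task reduces to verifying this finite condition in $G$.

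I would verify the finite condition by induction on $n$. For the base case $n=1$, the subgroup $\langle g_1 \rangle$ is nontrivial and by hypothesis surjects onto a nontrivial left-orderable group, so it is torsion-free, hence infinite cyclic; pick $\epsilon_1$ so that $g_1^{\epsilon_1}$ generates positively. For the inductive step, set $H = \langle g_1, \ldots, g_n \rangle$ and invoke the hypothesis to obtain a surjection $\phi : H \to L$ with $L$ a nontrivial left-orderable group equipped with a positive cone $P_L$. Relabel so that $\phi(g_i) \neq e_L$ for $i \le k$ and $g_i \in \ker\phi$ for $i > k$. Because $\phi$ is surjective with $L$ nontrivial, at least one $g_i$ must leave the kernel, i.e., $k \ge 1$. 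Choose $\epsilon_i$ for $i \le k$ so that $\phi(g_i)^{\epsilon_i} \in P_L$. If $k = n$, we are done; otherwise the subgroup $H' = \langle g_{k+1}, \ldots, g_n \rangle$ is a nontrivial finitely generated subgroup of $G$ with strictly fewer than $n$ generators, still satisfies the hypothesis of the theorem, and so by the inductive hypothesis admits signs $\epsilon_{k+1}, \ldots, \epsilon_n$ whose associated semigroup avoids $e$.

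The verification that the combined signs work is the only thing left. Suppose a word $w$ in $g_1^{\epsilon_1}, \ldots, g_n^{\epsilon_n}$ equals $e$ in $G$; apply $\phi$. The letters with index $i > k$ go to $e_L$, and the letters with index $i \le k$ go to elements of $P_L$; since $P_L$ is a semigroup missing $e_L$, no letter with $i \le k$ can occur. Then $w$ is a word in $g_{k+1}^{\epsilon_{k+1}}, \ldots, g_n^{\epsilon_n}$ equal to $e$, contradicting the inductive choice of signs. The main obstacle I expect is purely one of bookkeeping: one must be careful that the induction is on the cardinality $n$ of the finite set of test elements, not on the rank of the ambient subgroup $H$ (which may well grow when one passes to the kernel), and that the case $k=0$ is ruled out by surjectivity onto a \emph{nontrivial} group $L$.
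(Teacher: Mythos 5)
Your proof is correct and follows essentially the same route as the paper: the paper reduces Theorem \ref{original BH} to Conrad's finite semigroup criterion and then runs an induction on the cardinality of the test set (packaged there as the abstract ``respects extensions'' property together with a minimal-counterexample argument in Theorem \ref{classical BH}), which is exactly your inductive step of splitting the generators by the quotient map, orienting the non-kernel ones via the order on $L$, and invoking the smaller-cardinality hypothesis on the kernel elements. Your closing remark about inducting on the size of the finite test set rather than on the subgroup is precisely the point the paper flags with its cardinality restriction $|Y|<|X|$.
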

This is essential, for instance, in showing that fundamental groups of certain nice $3$-manifolds are left-orderable whenever they have a left-orderable quotient \cite{BRW05}, or in showing that the group of PL homeomorphisms of the disk is left-orderable \cite{CR15}.

Of the properties mentioned above, the literature contains proofs that CO, LO, and UPP groups satisfy a theorem similar to the Burns-Hale theorem, though the proofs are somewhat varied in flavour.  The purpose of this note is to show how CO, LO, diffuse and UPP groups all satisfy the same ``Burns-Hale theorem" (with similar proofs in all cases); and also to investigate the extent to which BO groups, recurrent orderable groups and circularly orderable groups satisfy something akin to the Burns-Hale theorem as well.

\section{Classical Burns-Hale type theorems}
Let $P$ be a property of a finite subset of a group, we suppose that the empty set always has property $P$.  We say that property $P$ \textit{respects extensions} if for every nonempty finite subset $X \subset G$ and every short exact sequence 
\[ 1 \rightarrow K \rightarrow \langle X \rangle \stackrel{q}{\rightarrow} H \rightarrow 1
\]
where $H$ is nontrivial, the following holds: If all finite subsets $Y \subset K$ with $|Y|<|X|$ have property $P$ and $q(X)$ has $P$, then $X$ has property $P$.  Note that the cardinality restriction on $Y$ may seem artificial, but will serve as the key to an inductive step in later proofs.

 Let $P$ be a property of finite subsets of a group that respects extensions.  Let $\mathcal{C}$ be a class of groups defined by $G \in \mathcal{C}$ if and only if every nonempty finite subset of $G$ has property $P$, in which case we will say that \textit{$\mathcal{C}$ has local property $P$}.  We call a group $G$ \textit{locally projectable to $\mathcal{C}$} if for every nontrivial finitely generated subgroup $F \subset G$ there is a nontrivial group $H \in \mathcal{C}$ and a surjective homomorphism $\phi :F \rightarrow H$.
 
 Then the following Burns-Hale type theorem holds (cf. \cite[Theorems 1 and 2]{BH72}):
 
 \begin{theorem}
 \label{classical BH}
 Let $G$ be a group, $P$ a property of finite subsets of a group that respects extensions, and $\mathcal{C}$ a class of groups having local property $P$.  Then $G$ is locally projectable to $\mathcal{C}$ if and only if $G \in \mathcal{C}$.
 \end{theorem}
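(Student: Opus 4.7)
The plan is to prove the two directions separately. The forward implication is essentially a tautology from the definitions; the substantive direction is the converse, which I would establish by strong induction on $|X|$ for $X$ a finite subset of $G$.

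For the forward implication, suppose $G \in \mathcal{C}$ and let $F \subset G$ be a nontrivial finitely generated subgroup. I would take $H = F$ and $\phi = \mathrm{id}_F$. That $F \in \mathcal{C}$ follows because any nonempty finite subset of $F$ is also a finite subset of $G$ and therefore has property $P$ by hypothesis.

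For the converse, suppose $G$ is locally projectable to $\mathcal{C}$. I would induct on $|X|$ to show that every finite subset $X \subset G$ has property $P$. The base case $|X| = 0$ is given. For the inductive step with $|X| = n \geq 1$, assume further that $\langle X \rangle$ is nontrivial (the only alternative is $X = \{e\}$, flagged below). Local projectability supplies a surjective homomorphism $q : \langle X \rangle \to H$ with $H \in \mathcal{C}$ nontrivial, yielding a short exact sequence
\[
1 \to K \to \langle X \rangle \stackrel{q}{\to} H \to 1,
\]
where $K = \ker q$. The image $q(X)$ is a finite subset of $H \in \mathcal{C}$, so it has property $P$; meanwhile, any $Y \subset K$ with $|Y| < n$ is a finite subset of $G$ of strictly smaller cardinality, so it has property $P$ by the inductive hypothesis. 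The assumption that $P$ respects extensions then delivers property $P$ for $X$.

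The main (minor) obstacle is the edge case $X = \{e\}$, where $\langle X \rangle$ is trivial and local projectability provides nothing. One disposes of this separately, by observing that $\{e\}$ has property $P$ in any nontrivial member of $\mathcal{C}$ and that $P$ depends only on the intrinsic multiplicative structure of the subset. The cardinality constraint $|Y| < |X|$ in the definition of ``respects extensions'' is precisely what allows the induction to close; without it the argument would become circular.
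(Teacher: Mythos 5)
Your proof is correct and takes essentially the same route as the paper's: the paper phrases your strong induction on $|X|$ as a minimal-counterexample argument, applies local projectability to $\langle X \rangle$ to obtain the short exact sequence, and invokes the respects-extensions hypothesis exactly as you do. The only differences are that you additionally spell out the easy forward direction and the $X=\{e\}$ edge case, both of which the paper leaves implicit.
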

\begin{proof}

Suppose that $G$ is locally projectable to $\mathcal{C}$, but that $G \notin \mathcal{C}$.  Then there is a smallest nonempty finite subset $X \subset G$ that does not have property $P$.

Since $G$ is locally projectable to $\mathcal{C}$, there is a short exact sequence
\[ 1 \rightarrow K \rightarrow \langle X \rangle \rightarrow H \rightarrow 1
\]
where $H \in \mathcal{C}$, and since $|X|$ is minimal every $Y \subset K$ with $|Y|<|X|$ has property $P$.   This contradicts the fact that property $P$ respects extensions.
%
%
\end{proof}

This version of the Burns-Hale theorem accounts for all of its incarnations throughout the literature: it applies to CO groups, LO groups, diffuse groups and UPP groups.  These applications are reviewed below, showing them all to be instances of the same principle.  As it seems that a Burns-Hale type theorem for diffuse groups has not appeared in the literature before, it is covered in more detail than the others.

\subsection{CO groups.} \cite[Proposition 3.11]{Navas10}, \cite[Theorem 9.17]{CR16} Given a finite set $X \subset G$, let $C(X)$ denote the smallest subsemigroup of $G$ satisfying $X \subset C(X)$ and $x^{-1}yx^2 \in C(X)$ for all $x, y \in C(X)$.  Then we have:

\begin{theorem}
\label{CO theorem}
 A group $G$ admits a Conradian left-ordering if and only if for every finite subset $X \subset G$ with $X \setminus \{ id \} = \{x_1, \ldots, x_n\}$ there exist exponents $\epsilon_i = \pm 1$ for $i = 1, \ldots, n$ such that $id \notin C(x_1^{\epsilon_1}, \ldots, x_n^{\epsilon_n})$.
\end{theorem}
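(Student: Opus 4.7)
My plan is to prove the two implications independently: the forward direction by direct verification that the positive elements in a Conradian ordering form a $C$-closed subsemigroup, and the reverse direction by a Tychonoff-style compactness argument that assembles a Conradian positive cone from the finite data provided by the hypothesis.

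For the forward direction, suppose $<$ is a Conradian left-ordering of $G$ and $X \setminus \{id\} = \{x_1, \ldots, x_n\}$.  I would pick $\epsilon_i = \pm 1$ so that $id < x_i^{\epsilon_i}$ for each $i$, and then note that the positive cone $P_< = \{g \in G : id < g\}$ is closed under the two operations defining $C$: closure under multiplication is immediate from left-invariance, while closure under $(a,b) \mapsto a^{-1}ba^2$ is exactly the Conradian property applied to positive elements.  Hence $C(x_1^{\epsilon_1}, \ldots, x_n^{\epsilon_n}) \subseteq P_<$, which forces $id \notin C(x_1^{\epsilon_1}, \ldots, x_n^{\epsilon_n})$.

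For the reverse direction I would work in the compact space $\{\pm 1\}^{G \setminus \{id\}}$ with the product topology.  For each finite $X \subseteq G \setminus \{id\}$, let $A_X$ denote the clopen set of sign assignments $\epsilon : G \setminus \{id\} \to \{\pm 1\}$ such that $id \notin C(x^{\epsilon(x)} : x \in X)$.  Each $A_X$ is nonempty by hypothesis, and the containment $C(x^{\epsilon(x)} : x \in X_1) \subseteq C(x^{\epsilon(x)} : x \in X_1 \cup X_2)$ yields the finite intersection property, so Tychonoff's theorem supplies a global sign assignment $\epsilon$ lying in every $A_X$.  I would then set $P = \bigcup_X C(x^{\epsilon(x)} : x \in X)$ as $X$ ranges over finite subsets of $G \setminus \{id\}$, and verify the four positive-cone axioms: $P \cdot P \subseteq P$ and closure under $(a,b) \mapsto a^{-1}ba^2$ follow from the directed nature of the union, $id \notin P$ holds by the choice of $\epsilon$, the disjointness $P \cap P^{-1} = \emptyset$ is then automatic, and $P \cup P^{-1} = G \setminus \{id\}$ follows from the singleton case $X = \{g\}$ which gives $g^{\epsilon(g)} \in P$.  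The resulting left-ordering defined by $g < h \iff g^{-1}h \in P$ is Conradian by the second closure property.

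The main obstacle I anticipate is the last bundle of checks: one has to confirm that the directed union $P$ genuinely inherits both the semigroup closure and the Conradian closure from its finite pieces $C(\cdot)$, so that the ordering one builds at the end is Conradian rather than merely left-invariant.  Compared to this, the compactness step is essentially bookkeeping, and the forward direction is a one-line unwinding of the Conradian axiom.
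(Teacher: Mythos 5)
Your proof is correct. The paper does not prove Theorem \ref{CO theorem} itself but defers to \cite{Navas10} and \cite{CR16}, and your argument --- the forward direction by observing that a Conradian positive cone is closed under both operations defining $C(\cdot)$, and the reverse direction by a finite-intersection-property compactness argument in $\{\pm 1\}^{G \setminus \{id\}}$ followed by the directed-union verification of the positive-cone axioms --- is essentially the standard argument given in those references.
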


One can verify that the property given in Theorem \ref{CO theorem} respects extensions, this is implicit in \cite[Lemma 9.18]{CR16} and \cite[Theorem H, pp. 66-67]{CH98}\footnote{The author attributes this theorem to Tararin.}. Thus the class of CO groups obeys a Burns-Hale type theorem.  

Since every finitely generated Conradian left-orderable group admits a homomorphism onto the integers, the Burns-Hale theorem for Conradian left-orderable groups has more often appeared in the literature as follows:

\begin{theorem} \cite{Brodskii84, RR02, Navas10, CH98}
A group is Conradian left-orderable if and only if it is locally indicable.
\end{theorem}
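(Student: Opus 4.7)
The plan is to establish the equivalence via two separate arguments: the implication ``locally indicable $\Rightarrow$ CO'' is an almost immediate application of the machinery already assembled, while ``CO $\Rightarrow$ locally indicable'' requires one genuine input from the Conrad theory of convex subgroups.

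For the easier direction, let $\mathcal{C}$ denote the class of Conradian left-orderable groups and let $P$ be the property of finite subsets from Theorem \ref{CO theorem}. The remark following Theorem \ref{CO theorem} records that $P$ respects extensions, and Theorem \ref{CO theorem} itself says that $\mathcal{C}$ has local property $P$. Since $\Z$ is Conradian left-orderable, any locally indicable group $G$ is by definition locally projectable onto $\mathcal{C}$, and Theorem \ref{classical BH} then yields $G \in \mathcal{C}$.

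For the reverse direction, the key task is to show that every nontrivial finitely generated Conradian left-orderable group admits a surjection onto $\Z$; once this is proved, applying it to each nontrivial finitely generated subgroup (each of which inherits a Conradian left-ordering by restriction) gives local indicability. My plan is to invoke Conrad's structure theory: in a Conradian left-ordered group the convex subgroups form a chain under inclusion, and for a finitely generated such group $G$ there is a largest proper convex subgroup $C$ whose quotient $G/C$ embeds order-isomorphically into $(\mathbb{R},+)$. The image is a nontrivial finitely generated subgroup of $\mathbb{R}$, hence free abelian of some positive rank $n$, and composing $G \twoheadrightarrow G/C \cong \Z^n$ with a coordinate projection produces the required surjection onto $\Z$.

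I expect the main obstacle to be marshalling the Conrad-theoretic ingredients for the forward direction: specifically, verifying that a finitely generated Conradian left-ordered group really does possess a maximal proper convex subgroup and that the resulting quotient is archimedean with finitely generated image in $\mathbb{R}$. Both facts are classical, but they are the only point in the argument where something beyond the abstract framework provided by Theorem \ref{classical BH} is required; the reverse direction, by contrast, is essentially a one-line invocation of that framework together with the observation that $\Z \in \mathcal{C}$.
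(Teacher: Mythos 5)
Your proposal is correct and follows essentially the same route as the paper: the paper obtains this statement from its Burns--Hale machinery (Theorem \ref{classical BH} applied to the Conradian property of Theorem \ref{CO theorem}, which respects extensions) together with the stated fact that every nontrivial finitely generated Conradian left-orderable group surjects onto $\mathbb{Z}$. The only difference is that the paper cites that surjection fact to the literature, whereas you sketch its standard proof via Conrad's convex-subgroup theory (maximal proper convex subgroup, archimedean quotient embedding in $\mathbb{R}$), and that sketch is sound.
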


\subsection{LO groups.} 
Given a finite subset $X \subset G$, let $S(X)$ denote the semigroup generated by $X$.
\begin{theorem} \cite[Theorem 2.2]{Conrad59}
A group $G$ is left-orderable if and only if for every finite subset $X \subset G$ with $X \setminus \{ id \} = \{ x_1, \ldots, x_n\}$ there exist exponents $\epsilon_i = \pm 1$ for $i = 1, \ldots, n$ such that $id \notin S( x_1, \ldots, x_n)$.
\end{theorem}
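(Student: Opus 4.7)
The plan is to handle the two directions separately. The forward direction is immediate: given a left-ordering of $G$ with positive cone $P = \{g \in G : g > id\}$, for each $i$ I would choose $\epsilon_i \in \{\pm 1\}$ so that $x_i^{\epsilon_i} \in P$; since $P$ is a subsemigroup avoiding $id$, so is $S(x_1^{\epsilon_1}, \ldots, x_n^{\epsilon_n}) \subseteq P$.

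For the backward direction I would construct a positive cone via a compactness argument. Consider the product space $\{+1, -1\}^{G \setminus \{id\}}$, compact by Tychonoff, and restrict to the closed subspace $T$ of sign assignments $\epsilon$ satisfying $\epsilon(g^{-1}) = -\epsilon(g)$. For each finite $X \subseteq G \setminus \{id\}$ let
\[
A_X = \bigl\{ \epsilon \in T : id \notin S\bigl(\{ x^{\epsilon(x)} : x \in X \}\bigr) \bigr\},
\]
which depends only on finitely many coordinates (hence is closed) and is nonempty by hypothesis. Since $A_{X \cup Y} \subseteq A_X \cap A_Y$, the family has the finite intersection property, so by compactness $\bigcap_X A_X \neq \emptyset$.

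Picking any $\epsilon$ in this intersection, I would set $P = \{g : \epsilon(g) = +1\}$. Trichotomy $G \setminus \{id\} = P \sqcup P^{-1}$ holds since $\epsilon \in T$. To check $P$ is a subsemigroup, suppose for contradiction that $g, h \in P$ but $gh \notin P$; then $gh \neq id$ (otherwise $h = g^{-1}$ would force $\epsilon(h) = -1$), so $(gh)^{-1} \in P$. Applying $\epsilon \in A_X$ with $X = \{g, h, (gh)^{-1}\}$ then yields $id = g \cdot h \cdot (gh)^{-1} \in S\bigl(\{g, h, (gh)^{-1}\}\bigr)$, a contradiction. Note that the hypothesis automatically rules out torsion (since for $g$ of finite order one has $id \in S(g) \cap S(g^{-1})$), which is what makes the sign convention on $T$ well-defined.

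The hard part will be setting up the compactness argument precisely, especially keeping track of the behaviour of $\epsilon$ under inversion; this is the reason for working inside $T$ from the outset. An equivalent Zorn's lemma argument extending a partial positive cone one element at a time would require verifying at each stage that semigroup closure is preserved, which is exactly the content of the finite hypothesis in the theorem, so either setup forces the same combinatorial work.
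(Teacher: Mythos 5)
Your proof is correct. Note that the paper itself offers no proof of this statement---it is quoted from Conrad---and your compactness argument on $\{\pm 1\}^{G\setminus\{id\}}$ is precisely the standard proof of this criterion (and of its Conradian and bi-orderable analogues elsewhere in the paper); you also silently repair the paper's typo, since the conclusion should read $id \notin S(x_1^{\epsilon_1},\ldots,x_n^{\epsilon_n})$. The one step you flag but do not fully discharge is the nonemptiness of $A_X$ inside $T$: it needs the observation that if you apply the hypothesis to $X \cup X^{-1}$, the resulting exponents are automatically antisymmetric (if $\epsilon_{x^{-1}} = \epsilon_x$ for some $x$ with $x \neq x^{-1}$, then both $x^{\epsilon_x}$ and $x^{-\epsilon_x}$ are among the generators and their product puts $id$ in the semigroup), which together with the torsion-freeness you already noted lets you extend to a genuine element of $T$. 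That is a one-line addition, not a gap in the strategy.
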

It is easy to verify that this property respects extensions, and so we arrive at the classical Burns-Hale theorem (Theorem \ref{original BH}), which is as it appears in \cite[Theorem 2]{BH72}.

\subsection{Diffuse groups.}
\label{diffuse}
The notion of a diffuse group was first introduced by Bowditch \cite{Bowditch00} as a generalization of the unique product property.  

Let $G$ be a torsion free group.  Given a finite subset $A$ of $G$, an extreme point of $A$ is $a \in A$ such that $a^{-1} A \cap A^{-1} a = \{ id \}$.  Here, $A^{-1} = \{ a^{-1} \mid a \in A \}$.   A group $G$ is called \textit{weakly diffuse} if every finite subset of $G$ has an extreme point.  A group is called \textit{diffuse} if every finite subset $A$ with $|A| >1$ has two extreme points.

Recall that an ordering of a group $G$ (partial or total) is \textit{locally invariant} if for all $x, y \in G$ with $y \neq 1$, either $xy > x$ or $xy^{-1} > x$.

\begin{theorem} \cite[Proposition 6.2]{LM12} For any group $G$, the following are equivalent:
\label{all the same}
\begin{enumerate}
\item $G$ is weakly diffuse.
\item $G$ is diffuse.
\item $G$ admits a locally invariant partial ordering.
\item $G$ admits a locally invariant total ordering.
\end{enumerate}
\end{theorem}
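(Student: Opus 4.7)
The implications $(2)\Rightarrow(1)$ and $(4)\Rightarrow(3)$ are immediate from the definitions, since a singleton trivially satisfies $a^{-1}\{a\}\cap\{a\}^{-1}a=\{id\}$ and every total order is a partial order.  My plan is to close the cycle by proving $(3)\Rightarrow(1)$, $(1)\Rightarrow(4)$, and $(4)\Rightarrow(2)$.  For $(3)\Rightarrow(1)$, given a locally invariant partial order $<$ on $G$ and a finite set $A\subset G$, I would pick any $<$-maximal element $a$ of $A$.  If $g\neq id$ satisfied $g=a^{-1}b=c^{-1}a$ for some $b,c\in A$, then $b=ag$ and $c=ag^{-1}$, and local invariance applied to $(a,g)$ would force $b>a$ or $c>a$, contradicting maximality of $a$ in $A$.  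Hence $a$ is extreme and $G$ is weakly diffuse.

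For $(1)\Rightarrow(4)$, my plan is to construct a locally invariant total order on $G$ via a compactness argument.  On each finite $F\subset G$, greedy extraction of extreme points gives a total order $a_1>a_2>\cdots>a_n$, where $a_k$ is chosen (using weak diffuseness) to be extreme in $F_k:=F\setminus\{a_1,\ldots,a_{k-1}\}$.  To verify local invariance on $F$: if $x=a_k$ and $y\neq id$ with $xy,xy^{-1}\in F$ but neither strictly above $a_k$ in the constructed order, then $xy$ and $xy^{-1}$ both lie in $F_k$, so $y\in a_k^{-1}F_k$ and (from $y^{-1}\in a_k^{-1}F_k$) also $y\in F_k^{-1}a_k$; extremeness of $a_k$ in $F_k$ then forces $y=id$, a contradiction.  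A standard Tychonoff argument on the closed subset of locally invariant total orderings inside $\{-1,0,+1\}^{G\times G}$ patches these finite data into a global locally invariant total order on $G$.

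The step I expect to be the main obstacle is $(4)\Rightarrow(2)$: producing a \emph{second} extreme point of any $A$ with $|A|\geq 2$.  The maximum of $A$ is extreme by the $(3)\Rightarrow(1)$ argument applied to the total order, but, unlike in the bi-invariant setting, local invariance does not ensure that the minimum of $A$ is extreme, so the familiar ``min and max'' trick fails.  I would instead argue directly from weak diffuseness, inducting on $|A|$ and controlling the obstruction ``$ba_1^{-1}b\in A$'' that prevents an extreme point $b$ of $A\setminus\{a_1\}$ from being extreme already in $A$, by applying weak diffuseness to suitable augmentations of $A$.  This delicate combinatorial step is the core of \cite{LM12}, which I would cite for the complete argument.
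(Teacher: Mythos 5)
The paper offers no proof of this statement---it is quoted verbatim from \cite[Proposition 6.2]{LM12}---so there is no internal argument to compare against; I can only assess your proposal on its own terms. The parts you actually prove are correct. For $(3)\Rightarrow(1)$: if $a$ is $<$-maximal in $A$ and $g\neq id$ lies in $a^{-1}A\cap A^{-1}a$, then indeed $ag\in A$ and $ag^{-1}\in A$, and local invariance at the pair $(a,g)$ puts one of them strictly above $a$, contradicting maximality. For $(1)\Rightarrow(4)$: the greedy extraction $a_1>a_2>\cdots>a_n$ with $a_k$ extreme in $F_k=F\setminus\{a_1,\ldots,a_{k-1}\}$ does verify the local invariance condition whenever $x$, $xy$, $xy^{-1}$ all lie in $F$ (your computation $y\in a_k^{-1}F_k\cap F_k^{-1}a_k=\{id\}$ is exactly right), and the Tychonoff patching over the clopen sets of admissible restrictions to $F\times F$ is standard and goes through. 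You are also right that the ``minimum is the second extreme point'' trick is unavailable here, since local invariance is a one-sided condition.

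The genuine gap is that $(4)\Rightarrow(2)$ (equivalently, weakly diffuse $\Rightarrow$ diffuse) is never proved, and without it your argument only establishes $(1)\Leftrightarrow(3)\Leftrightarrow(4)$ together with $(2)\Rightarrow(1)$---that is, the equivalence class containing ``diffuse'' is not closed. You correctly isolate the obstruction: if $b$ is extreme in $A\setminus\{a\}$ but not in $A$, then some $g\neq id$ has $bg,bg^{-1}\in A$ with one of them equal to $a$, forcing $ba^{-1}b\in A$. But ``controlling this obstruction by applying weak diffuseness to suitable augmentations of $A$'' is a description of a strategy, not an argument; nothing in the proposal shows the iteration terminates or produces a contradiction, and this is precisely the substantive content of \cite[Proposition 6.2]{LM12}. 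Since the whole point of the exercise is to prove the equivalence rather than cite it, deferring the one nontrivial implication to the reference leaves the proof incomplete. Either carry out that combinatorial step in full, or restructure the cycle so that $(2)$ is reached from one of the order-theoretic conditions by an argument you can actually supply.
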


For some time it was unknown whether or not the above properties were also equivalent to left-orderability of $G$, but Nathan Dunfield has recently produced an example of a group which is not LO, but is diffuse \cite[Appendix]{KR16}.  It is unknown whether or not these properties are equivalent to the unique product property.

%
%
%
%

\begin{theorem}
Let $G$ be a group.  Then $G$ is diffuse if and only if for every nontrivial finitely generated subgroup $F$ of $G$ there exists a homomorphism $F \rightarrow H$ onto a nontrivial diffuse group.
\end{theorem}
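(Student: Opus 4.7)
The plan is to deduce this from Theorem~\ref{classical BH} by taking as property
\[ P(X): \quad \text{if } id \in X, \text{ then } X \text{ has an extreme point}. \]
Because extreme points are translation invariant, a group has local property $P$ precisely when every nonempty finite subset has an extreme point, which by Theorem~\ref{all the same} coincides with being diffuse. The backward direction of the theorem is immediate from the identity map, so the task reduces to checking that $P$ respects extensions.

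Let $X \subset G$ be nonempty and finite, and let $1 \to K \to \langle X \rangle \stackrel{q}{\to} H \to 1$ be short exact with $H$ nontrivial; assume $q(X)$ has $P$ and that every $Y \subset K$ with $|Y| < |X|$ has $P$. If $id \notin X$ there is nothing to prove. Otherwise $id \in q(X)$, so $P(q(X))$ provides an extreme point $q(x_0)$ of $q(X)$ with $x_0 \in X$; set $X_0 = X \cap x_0 K$. Nontriviality of $H$, together with the fact that $q(X)$ generates $H$, prevents $q(X)$ from equalling $\{q(x_0)\}$ (since otherwise $q(x_0) = id$, making $H$ trivial), so $|X_0| < |X|$. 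Because $x_0 \in X_0$, the translated set $x_0^{-1} X_0 \subset K$ contains $id$ and has size less than $|X|$, so the inductive hypothesis furnishes an extreme point $x_0^{-1} x_1$; equivalently, $x_1^{-1} X_0 \cap X_0^{-1} x_1 = \{id\}$.

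The concluding step is to promote $x_1$ to an extreme point of $X$: any element $w$ of $x_1^{-1} X \cap X^{-1} x_1$, written as $w = x_1^{-1} y = z^{-1} x_1$ with $y, z \in X$, projects under $q$ (using $q(x_1) = q(x_0)$) into $q(x_0)^{-1} q(X) \cap q(X)^{-1} q(x_0) = \{id\}$; hence $q(y) = q(z) = q(x_0)$, forcing $y, z \in X_0$, and then extremeness of $x_1$ in $X_0$ gives $w = id$. The main obstacle—and the reason for restricting $P$ to sets containing $id$—is the possibility that all of $X$ lies in a single coset of $K$ distinct from $K$ itself, which would leave a subset of $K$ of cardinality $|X|$ with no inductive handle; the clause ``$id \in X$'' neutralises this scenario because $id \in X \cap K$ forces the offending coset to be $K$ itself, contradicting the nontriviality of $H$.
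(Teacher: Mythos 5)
Your proof is correct and follows essentially the same route as the paper's: restrict the property to finite sets containing the identity, take an extreme point $q(x_0)$ of $q(X)$, use the inductive hypothesis on the translated fibre $x_0^{-1}(X\cap x_0K)\subset K$ (the paper's $a^{-1}X\cap K$), and promote the resulting point to an extreme point of $X$ by first projecting to $H$ and then invoking extremeness in the fibre. The only differences are notational ($x_0=a$, $x_1=ab$), so no further comparison is needed.
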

\begin{proof}
One can check that the defining property of weakly diffuse groups is equivalent to the following:  Every finite set containing the identity admits an extreme point.  We will show that this property respects short exact sequences.

To this end, suppose that $X \subset G$ is a finite subset containing the identity, and that 
\[ 1 \rightarrow K \rightarrow \langle X \rangle \stackrel{q}{\rightarrow} H \rightarrow 1
\]
is a short exact sequence where $H$ is a nontrivial diffuse group.   Suppose that for every set $Y \subset K$ with fewer than $|X|$ elements, the set $Y$ admits an extreme point, and that the set $q(X)$ admits an extreme point.

Let $q(a)$ be an extreme point of $q(X)$, and note that $1 \leq |a^{-1}X \cap K| \leq |X|$ since at least one point of $a^{-1}X$ is not in $K$, and $id \in a^{-1}X \cap K$.   Choose an extreme point $b$ of $a^{-1}X \cap K$.  We claim that $ab$ is an extreme point for $X$, which will complete the proof.

First note that  $ab \in X$ since $b \in a^{-1}X$.  Now let $h \in b^{-1} a^{-1} X \cap X^{-1} ab$ be given, we show that $h=id$.  Applying the homomorphism $q$ and recalling that $b \in K$, we get
\[ q(h) \in q(a)^{-1} q(X) \cap q(X)^{-1}q(a)
\]
which implies that $q(h) =id$ since $q(a)$ is an extreme point of $q(X)$.

Now $h \in K$, and so $h \in (b^{-1} a^{-1} X \cap X^{-1} ab) \cap K$, or in other words
\begin{align*}
h &\in (b^{-1} a^{-1} X \cap K ) \cap (X^{-1} ab \cap K ) \\ 
&= b^{-1} (a^{-1} X \cap K ) \cap (X^{-1} a \cap K )b \\ 
&=b^{-1} (a^{-1} X \cap K ) \cap(a^{-1} X \cap K )^{-1} b .
\end{align*}
thus $h =id$, since $b$ is an extreme point of $a^{-1}X \cap K$.  The result now follows by Theorem \ref{classical BH}.

\end{proof}

\subsection{UPP groups.}
\label{UPP}
A group $G$ is said to have the \textit{unique product property} (we say ``$G$ is UPP" for short) if for every pair $(A, B)$ of finite subsets of $G$ there exists at least one pair $(a,b) \in A \times B$ such that if $ab = a'b'$ where $a' \in A$ and $b' \in B$ then $a=a'$ and $b=b'$.  The element $ab$ is called \textit{a unique product for $AB$}. 


Equivalently, a group $G$ is UPP if and only if it satisfies the following property on finite subsets:  For every finite subset $A$ of $G$, whenever $X, Y \subset A$ with $|X|+|Y| \leq |A|$ and $\{ id \} \subset X \cap Y$ then there is a unique product $xy$ for $XY$.

To see that this condition implies UPP, take two subsets $X$ and $Y$ of $G$ that do not satisfy $\{ id \} \subset X \cap Y$.  Choose $x \in X$ and $y \in Y$, and let $(x^{-1}g)(hy^{-1})$ be a unique product for $(x^{-1}X)(Yy^{-1})$.  Then one checks that $gh$ is a unique product for $XY$.

\begin{theorem} \cite[Theorem 1]{BH72} and \cite[Lemma 1.8 (iii)]{Passman77}
Let $G$ be a group.  Then $G$ is UPP if and only if for every nontrivial finitely generated subgroup $F$ of $G$ there exists a homomorphism $F \rightarrow H$ onto a nontrivial UPP group.
\end{theorem}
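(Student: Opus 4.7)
The plan is to apply Theorem~\ref{classical BH} with property $P$ defined on finite subsets $A \subset G$ by the condition that, whenever $X, Y \subset A$ satisfy $|X|+|Y| \leq |A|$ and $id \in X \cap Y$, the product $XY$ contains a unique product. By the equivalence established just above in Subsection~\ref{UPP}, $G$ is UPP if and only if every nonempty finite subset of $G$ has $P$, so the associated class $\mathcal{C}$ is precisely the class of UPP groups.

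It remains to check that $P$ respects extensions. I would take a finite $A \subset G$ with a short exact sequence $1 \rightarrow K \rightarrow \langle A \rangle \stackrel{q}{\rightarrow} H \rightarrow 1$ in which $H$ is nontrivial (and, in the Burns-Hale contradiction argument, UPP), assume that every $Z \subset K$ with $|Z| < |A|$ has $P$ and that $q(A)$ has $P$, and show that $A$ has $P$. Given $X, Y \subset A$ with $|X|+|Y| \leq |A|$ and $id \in X \cap Y$, I would apply the UPP of $H$ to obtain a unique product $q(a)q(b)$ in $q(X)q(Y)$, pick lifts $a \in X, b \in Y$, and set $X_a = X \cap aK$, $Y_b = Y \cap bK$, $\tilde{X} = a^{-1}X_a$, $\tilde{Y} = Y_b b^{-1}$, both subsets of $K$ containing $id$. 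A lifting argument analogous to the one in the diffuse case above shows that a unique product $\tilde{x}\tilde{y}$ for $\tilde{X}\tilde{Y}$ lifts to the unique product $(a\tilde{x})(\tilde{y}b)$ for $XY$, reducing the problem to finding a unique product in $\tilde{X}\tilde{Y}$ inside $K$.

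Since $|\tilde{X}|+|\tilde{Y}| = |X_a|+|Y_b| \leq |X|+|Y| \leq |A|$ and $id \in \tilde{X} \cap \tilde{Y}$, the union $\tilde{X} \cup \tilde{Y}$ has at most $|A|-1$ elements; padding by further elements of $K$ (available because torsion-freeness forces $K$ to be trivial or infinite) yields a subset $\tilde{Z} \subset K$ with $|\tilde{Z}| = |\tilde{X}|+|\tilde{Y}|$ and $|\tilde{Z}| < |A|$, to which the inductive hypothesis supplies property $P$ and hence a unique product in $\tilde{X}\tilde{Y}$. The main obstacle I anticipate is the tight case $|X_a|+|Y_b| = |A|$, which forces $X, Y \subset K$ with $|X|+|Y|=|A|$; here the SES does not separate $X$ from $Y$ and the padding argument breaks down. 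I would handle this by reducing to $X \cup Y \subset A \cap K$ (of size at most $|A|-1$ since $H$ is nontrivial) and iterating the construction on the finitely generated subgroup $\langle X \cup Y \rangle$, which inherits local projectability to UPP groups, closely following the trick used in \cite{BH72} and \cite{Passman77}.
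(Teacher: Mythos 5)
Your proposal is correct and follows essentially the same route as the paper's proof: the same finite-subset property $P$, the same fibers $S = X \cap aK$ and $T = Y \cap bK$ over a unique product in the quotient, the same translation $a^{-1}S$ and $Tb^{-1}$ into $K$, and the same lifting argument showing $(a\tilde{x})(\tilde{y}b)$ is a unique product for $XY$. The only difference is bookkeeping: the paper avoids your padding step and your ``tight case'' by first dispatching $X, Y \subset K$ separately and then observing that otherwise at least one of $q(X)$, $q(Y)$ properly contains $\{id\}$, so that $|S| + |T| < |X| + |Y| \leq |A|$ and the inductive hypothesis applies to $a^{-1}S \cup Tb^{-1}$ directly.
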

\begin{proof}
Let $A$, $X$ and $Y$ be subsets of $G$ as above, suppose there is a short exact sequence
\[ 1 \rightarrow K \rightarrow \langle A  \rangle \stackrel{q}{\rightarrow} H \rightarrow 1
\]
such that every subset $B$ of $K$ with $|B|<|A|$ satisfies the property above, that $H$ is nontrivial and there is a unique product for $q(X)q(Y)$, say $q(x)q(y)$.

First if $X, Y \subset K$ then $|X|+|Y|<|A|$ and the existence of a unique product for $XY$ follows immediately from our assumptions.  So suppose otherwise, and set $S = \{ s \in X \mid q(s) = q(x) \}$ and $T = \{ t \in Y \mid q(t) = q(y) \}$, note that at least one of $|S|<|X|$ or $|T|<|Y|$ holds since one of $q(X)$ or $q(Y)$ contains both $\{id\}$ and at least one nonidentity generator for $H$.  Then $x^{-1}S \cup Ty^{-1} \subset K$ and satisfies $|x^{-1}S \cup Ty^{-1}| \leq |S|+|T| < |A|$ and so by assumption there is a unique product for $(x^{-1}S)(Ty^{-1})$, say $(x^{-1}s)(ty^{-1})$.  

We will show that $st$ is a unique product for $XY$, completing the proof.  For suppose that $st = cd$ for some $c \in X$ and $d \in Y$.  Then $q(c)q(d) = q(s) q(t) = q(x) q(y)$, so that $q(c)= q(x)$ and $q(d) = q(y)$.  But then $c \in S$ and $d \in T$, so that $(x^{-1}s)(ty^{-1}) = (x^{-1}c)(dy^{-1})$ forces $x^{-1}s = x^{-1}c$ and $ty^{-1} = dy^{-1}$, since $(x^{-1}s)(ty^{-1})$ is a unique product for $(x^{-1}S)(Ty^{-1})$.  Thus $s=c$ and $t=d$, so the conclusion follows.
\end{proof}

\section{Nonstandard Burns-Hale variants}

There are some natural classes of groups, related to those in the previous section, for which the Burns-Hale theorem cannot hold.  Most notably the class of bi-orderable groups does not admit a Burns-Hale type theorem, since local indicability of a group $G$ yields only a Conradian left-ordering of $G$ (instead of a bi-ordering, as one would expect if Theorem \ref{classical BH} held for BO groups).    To explain this behaviour we make an observation:

\begin{proposition}
Suppose that $\mathcal{C}$ is a class of groups that is closed under taking subgroups, and that $\mathcal{C}$ satisfies Theorem \ref{classical BH}.   If
\[ 1 \rightarrow K \rightarrow G \stackrel{q}{\rightarrow} H \rightarrow 1
\]
is a short exact sequence where both $K, H \in \mathcal{C}$, then $G \in \mathcal{C}$.
\end{proposition}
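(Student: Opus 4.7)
The plan is to apply Theorem \ref{classical BH} directly: since $\mathcal{C}$ satisfies the Burns-Hale conclusion, it suffices to verify that $G$ is locally projectable to $\mathcal{C}$. So I would fix an arbitrary nontrivial finitely generated subgroup $F \subset G$ and produce a surjection from $F$ onto a nontrivial member of $\mathcal{C}$.

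The key observation is that $q(F)$ is a subgroup of $H$, and hence lies in $\mathcal{C}$ by the assumption that $\mathcal{C}$ is closed under taking subgroups. From here I would split into two cases based on whether $q(F)$ is trivial. If $q(F)$ is nontrivial, then the restriction $q|_F : F \to q(F)$ is a surjection onto a nontrivial group in $\mathcal{C}$, which is exactly what is required. If instead $q(F)$ is trivial, then $F \subset K$; since $K \in \mathcal{C}$ and $\mathcal{C}$ is closed under subgroups, $F$ itself lies in $\mathcal{C}$, and the identity map $F \to F$ serves as the desired surjection (using that $F$ is nontrivial by hypothesis).

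Having produced the required local projection for every nontrivial finitely generated $F \subset G$, Theorem \ref{classical BH} then yields $G \in \mathcal{C}$. There is no real obstacle in this argument; the hypothesis that $\mathcal{C}$ is closed under subgroups is doing all the work, letting us handle both the ``quotient part'' and the ``kernel part'' of $F$ uniformly. The point of recording the proposition seems to be diagnostic rather than technical: any class failing to be closed under extensions (such as BO groups, where a central extension of $\Z$ by $\Z$ can be the Heisenberg group, which is not BO) cannot satisfy the classical Burns-Hale theorem, explaining the failures discussed in the next section.
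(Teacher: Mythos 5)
Your proof is correct and takes essentially the same approach as the paper: verify local projectability by splitting on whether $q(F)$ is trivial, then invoke Theorem \ref{classical BH}. One caveat on your closing aside, which is not part of the proof proper: the Heisenberg group is torsion-free nilpotent and therefore \emph{is} bi-orderable; the standard example of a non-bi-orderable extension of $\mathbb{Z}$ by $\mathbb{Z}$ is the Klein bottle group $\langle a,b \mid bab^{-1}=a^{-1}\rangle$.
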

\begin{proof}
Suppose that $F$ is a finitely generated subgroup of $G$.  If $q(F)$ is nontrivial, then $q:F \rightarrow q(F)$ provides a surjection of $F$ onto a nontrivial element of $\mathcal{C}$.  Otherwise $F \subset K$, and so $F \in \mathcal{C}$ since $\mathcal{C}$ is closed under taking subgroups.  Since $\mathcal{C}$ satisfies Theorem \ref{classical BH}, $G \in \mathcal{C}$.
\end{proof}

This accounts for why BO groups cannot satisfy Theorem \ref{classical BH} (cf. \cite[Problem 1.23]{CR16}), shows that circularly ordered groups cannot satisfy Theorem \ref{classical BH} (the group $\mathbb{Z}_n \times \mathbb{Z}_n$ is not circularly orderable, for example, since all finite circularly orderable groups are cyclic), nor can groups admitting right-recurrent orderings (cf. \cite[Problem 10.45]{CR16} and Proposition \ref{no extensions}).    We study each of these classes of groups in more detail below.

\subsection{Bi-orderable groups.}
Given a group $G$ and a set $X \subset G$, let $N(X)$ denote the smallest normal subgroup containing $X$.  We say that a subgroup $N \subset G$ is finitely normally generated if $N = N( x_1, \ldots, x_n)$ for some finite set of elements $x_1, \ldots, x_n$ of $G$.  The normal subsemigroup of $G$ generated by $X$ will be denoted  $NS(X)$, note that as a semigroup $NS(X)$ is generated by $\{ gxg^{-1} \mid g\in G, x \in X \}$. 

\begin{proposition} \cite{Fuchs63, Ohnishi52, Los54}
\label{semigroup biord}
A group $G$ is bi-orderable if and only if for every finite subset $\{ x_1, \ldots, x_n \} \subset G$ not containing the identity there exists exponents $\epsilon_i = \pm 1 $ such that $NS(x_1^{\epsilon_1}, \ldots, x_n^{\epsilon_n})$ does not contain the identity.
\end{proposition}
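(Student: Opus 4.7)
The plan is to prove the two directions separately. The forward direction is immediate: if $G$ is bi-orderable with positive cone $P = \{g \in G : id < g\}$, then bi-invariance of the ordering makes $P$ a normal subsemigroup. Given $\{x_1, \ldots, x_n\} \subset G \setminus \{id\}$, setting $\e_i = +1$ when $x_i \in P$ and $\e_i = -1$ otherwise yields $x_i^{\e_i} \in P$ for each $i$, so $NS(x_1^{\e_1}, \ldots, x_n^{\e_n}) \subseteq P$, which does not contain $id$.

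For the converse, my plan is a compactness argument in the product space $\{-1, +1\}^{G \setminus \{id\}}$ to upgrade the finite hypothesis to a global choice of signs. For each finite $F \subset G \setminus \{id\}$, let $C_F$ be the set of sign functions $\e : G \setminus \{id\} \to \{-1, +1\}$ such that $id \notin NS(\{g^{\e(g)} : g \in F\})$. Each $C_F$ is clopen (it depends only on coordinates in $F$) and nonempty by hypothesis. The family $\{C_F\}$ has the finite intersection property because $C_{F_1 \cup \cdots \cup F_k} \subseteq C_{F_1} \cap \cdots \cap C_{F_k}$, by monotonicity of $NS$ in its generating set, so Tychonoff's theorem produces $\e$ in $\bigcap_F C_F$.

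With $\e$ in hand I would set $P = NS(\{g^{\e(g)} : g \in G \setminus \{id\}\})$ and verify it is the positive cone of a bi-ordering via $a < b \iff a^{-1}b \in P$. The three properties to check are: $id \notin P$ (any expression of $id$ as a product of conjugates of generators uses only finitely many of them, contradicting $\e \in C_F$ for that finite $F$); $P \cup P^{-1} = G \setminus \{id\}$ (since $g^{\e(g)} \in P$ for every $g \neq id$); and $P \cap P^{-1} = \emptyset$ (else $g g^{-1} = id \in P$, a contradiction). Left-invariance of the resulting ordering is automatic, and right-invariance follows from $P$ being a normal subsemigroup. I expect the compactness step to be the main point requiring care; the rest is routine bookkeeping with semigroup generation, and the argument is structurally parallel to the classical Conrad semigroup characterization of left-orderability cited earlier in the excerpt.
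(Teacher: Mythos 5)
Your proof is correct. The paper states Proposition \ref{semigroup biord} without proof, citing the classical references, and your argument is the standard one: the easy direction via the positive cone, and the converse via a compactness (finite intersection property) argument in $\{\pm 1\}^{G \setminus \{id\}}$ followed by routine verification that $P = NS(\{g^{\e(g)}\})$ is a conjugation-invariant positive cone---the same compactness machinery the paper itself deploys for circular orderings in Proposition \ref{pre order}.
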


\begin{remark} In \cite[Proposition 1.4]{Navas10}, Navas points out that we can replace $NS(x_1^{\epsilon_1}, \ldots, x_n^{\epsilon_n})$ with the smallest semigroup $S$ containing $x_i^{\epsilon_i}$ for $i=1, \ldots, n$ and closed under the property:  For all $x, y \in S$, both $xyx^{-1}$ and $x^{-1}yx$ are in $S$.  This improvement does not seem to allow one to weaken the hypotheses of the theorem below.
\end{remark}

\begin{theorem} (The Burns-Hale analog for bi-orderable groups)
\label{biorderable_theorem}
A group $G$ is bi-orderable if and only if for every nontrivial finitely normally generated subgroup $N$ of $G$ there exists a group $H$ and a surjective homomorphism $\phi:G \rightarrow H$ satisfying:
\begin{enumerate}
\item $\phi(N)$ is nontrivial, and
\item $\phi(N)$ is bi-orderable, and the bi-ordering is invariant under conjugation by elements of $H$.
\end{enumerate}
\end{theorem}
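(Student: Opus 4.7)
The forward direction is immediate: if $G$ is bi-orderable then taking $H=G$ and $\phi$ the identity works, since any bi-ordering on $G$ restricts to a bi-ordering on $N$ that is automatically invariant under $H$-conjugation. So the substance is in the converse, which I plan to handle via Proposition \ref{semigroup biord} in the style of the Burns-Hale minimality argument used for Theorem \ref{classical BH}.

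Suppose $G$ satisfies the projection hypothesis but is not bi-orderable. By Proposition \ref{semigroup biord} there is a finite set $X=\{x_1,\ldots,x_n\}\subset G\setminus\{id\}$ such that $id\in NS(x_1^{\epsilon_1},\ldots,x_n^{\epsilon_n})$ for every choice of exponents $\epsilon_i=\pm 1$; choose $X$ of minimum cardinality with this bad property. Let $N=N(X)$, which is nontrivial and finitely normally generated, so the hypothesis produces a surjection $\phi:G\to H$ such that $\phi(N)$ is nontrivial and carries a bi-ordering with positive cone $P$ that is invariant under $H$-conjugation. Note that $\phi(N)$ is normal in $H$ (because $N$ is normal in $G$) and is normally generated in $H$ by the $\phi(x_i)$, so the index set $I=\{i:\phi(x_i)\neq id\}$ is nonempty.

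Now I select signs. For $i\in I$, take $\epsilon_i$ so that $\phi(x_i)^{\epsilon_i}\in P$. For $i\notin I$ the complementary set $\{x_i:i\notin I\}$ has strictly fewer than $n$ elements (since $I$ is nonempty), so by minimality of $X$ there exist $\epsilon_i$ with $i\notin I$ such that $id\notin NS(x_i^{\epsilon_i}:i\notin I)$ (with the convention that this holds vacuously when $I=\{1,\ldots,n\}$). I then claim that for this combined choice, $id\notin NS(x_1^{\epsilon_1},\ldots,x_n^{\epsilon_n})$, which contradicts the badness of $X$.

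To verify the claim, suppose $id=\prod_j g_j x_{i_j}^{\epsilon_{i_j}} g_j^{-1}$ in $G$ and apply $\phi$, obtaining $id=\prod_j \phi(g_j)\phi(x_{i_j})^{\epsilon_{i_j}}\phi(g_j)^{-1}$ in $H$. Each factor with $i_j\in I$ is an $H$-conjugate of an element of $P$, hence lies in $P$ by $H$-invariance; each factor with $i_j\notin I$ is trivial. Deleting the trivial factors leaves a product of elements of $P$ equal to the identity, but $P$ is a subsemigroup avoiding the identity, so there can be no such factors. Thus every $i_j$ lies outside $I$, and the original product witnesses $id\in NS(x_i^{\epsilon_i}:i\notin I)$, contradicting the choice made using minimality. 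The main obstacle I anticipate is precisely this kernel-versus-quotient bookkeeping: the projection $\phi$ only controls the $i\in I$ factors, and it is the minimality of $X$ that allows us to handle the $i\notin I$ factors and glue the two choices of signs together.
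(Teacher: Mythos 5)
Your proof is correct and takes essentially the same approach as the paper's: where the paper runs an induction on the size of the finite set $\{x_1,\ldots,x_n\}$, you take a minimal counterexample, but the core mechanism is identical --- split the generators according to whether $\phi(x_i)$ is trivial, choose signs for the nontrivial ones via the conjugation-invariant positive cone and for the trivial ones via the smaller-set hypothesis, then push an arbitrary product of conjugates through $\phi$ to conclude it is either a nonempty product inside $P$ or lies entirely in the normal subsemigroup generated by the kernel elements. The bookkeeping you flag as the main obstacle is handled correctly.
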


\begin{proof}
If $G$ is bi-orderable, then the identity homomorphism $G \rightarrow G$ always provides the required homomorphism.

For the other direction, we suppose that $G$ satisfies the hypotheses of the theorem.  We will show that the hypotheses of Proposition \ref{semigroup biord} hold:

\hspace{0.5cm}\parbox{11cm}{
 For every finite subset $\{ x_1, \ldots, x_n \}$ of $G$ not containing $id$, there exist exponents $\epsilon_i = \pm 1$, $i = 1, \ldots, n$ such that $id \notin NS(x_1^{\epsilon_1}, \ldots, x_n^{\epsilon_n})$.}
\vspace{1ex}

We proceed by induction on $n$.  As a base case, suppose that $x \in G$ is not the identity, and choose a group $H$ and a homomorphism $\phi :G \rightarrow H$ such that $\phi( N(x))$ is nontrivial and bi-orderable, and the ordering is invariant under conjugation by elements of $H$.  Set $\epsilon = +1$ and consider an arbitrary element $w$ of $NS(x^{\epsilon})$.  The element $w$ is a product of the form
\[  w = \prod_{i=1}^k g_i xg_i^{-1}
\]
where $g_i \in G$, and so applying $\phi$ gives
\[ \phi(w) = \prod_{i=1}^k \phi(g_i) \phi(x) \phi(g_i)^{-1}
\]
Since $\phi(N(x))$ admits a bi-ordering that is invariant under conjugation by elements of $H$, it follows that in this bi-ordering $\phi(g_i) \phi(x)\phi(g_i)^{-1}$ is the same sign for all $i$. Therefore $\phi(w) \neq id$, and hence $w \neq id$.

Now assume that the hypotheses of Proposition \ref{semigroup biord} hold for every finite subset of $G$ containing $n-1$ or fewer elements, none of which are the identity.  Let $\{ x_1, \ldots, x_n \} \subset G \setminus \{ id\}$ be given.  Choose a group $H$ and a homomorphism $\phi: G \rightarrow H$ such that $\phi(N(x_1, \ldots, x_n))$ is bi-orderable with ordering $<$ that is invariant under conjugation by elements of $H$. 

 Re-index the $x_i$'s if necessary, so that 
\[
\phi(x_i)  \left\{ \begin{array}{ll}
=id \mbox{ if $1 \leq i \leq s$} \\
\neq id \mbox{ if $s+1 \leq i \leq n$}
\end{array} \right.
\]
Note that at least one of $\phi(x_i)$ is not equal to the identity, since the image $\phi(N(x_1, \ldots, x_n))$ is nontrivial.

Choose exponents $\epsilon_i = \pm 1$ as follows. For $i = s+1, \ldots, n$ choose $\epsilon_i$ so that $\phi(x_i^{\epsilon_i}) >1$.  For $i = 1, \ldots, s$, use the induction hypothesis to choose $\epsilon_i$ so that $id \notin NS(x_1^{\epsilon_1} , \ldots, x_s^{\epsilon_s})$.  Now let $w$ be an arbitrary element of $NS(x_1^{\epsilon_1} , \ldots, x_n^{\epsilon_n})$, then
\[ w = \prod_{j=1}^k g_j x_{i_j}^{\epsilon_{i_j}} g_j^{-1}
\]
where $g_j \in G$.  If there exists $j$ such that $i_j >s$, then $\phi(x_{i_j})>id$ and so $ \phi(g_j) \phi( x_{i_j}^{\epsilon_{i_j}}) \phi( g_j^{-1}) > id$.  Therefore
\[ \phi(w) = \prod_{j=1}^k \phi(g_j) \phi( x_{i_j}^{\epsilon_{i_j}}) \phi( g_j^{-1})
\]
is a product of non-negative elements with at least one strictly positive element.  Thus $\phi(w) >id$, and so $w \neq id$.

On the other hand, if $i_j \leq s$ for all $j$, then $w \in NS(x_1^{\epsilon_1} , \ldots, x_s^{\epsilon_s})$, and so $w \neq id$ by the induction hypothesis.
\end{proof}

From this theorem it follows that if $G$ is residually torsion-free nilpotent or residually torsion-free central (a group is residually central if $x \notin [x, G]$ for all nonidentity $x \in G$) then $G$ is bi-orderable.  We can also provide an alternative proof of the following result of Rhemtulla:

\begin{proposition}\cite{Rhemtulla73}
If $G$ is residually $p$-finite for infinitely many primes $p$, then $G$ is bi-orderable.
\end{proposition}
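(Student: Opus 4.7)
The plan is to apply Theorem \ref{biorderable_theorem}, reducing the statement to Mal'cev's classical fact that torsion-free nilpotent groups are bi-orderable. Let $N$ be a nontrivial, finitely normally generated subgroup of $G$, and fix a nontrivial element $x \in N$. It will suffice to construct a surjection $\phi\colon G \twoheadrightarrow H$ onto a torsion-free nilpotent group $H$ with $\phi(x) \neq id$: any bi-ordering of $H$ then restricts to a bi-ordering on $\phi(N)$, and since bi-orderings are by definition invariant under conjugation by the ambient group, both conditions of Theorem \ref{biorderable_theorem} hold automatically.

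To construct $\phi$, I first observe that $G$ is residually nilpotent: being residually $p$-finite for at least one prime $p$ and finite $p$-groups being nilpotent, one gets $\bigcap_{m \geq 1} \gamma_m(G) = \{id\}$. Let $m$ be minimal with $x \notin \gamma_m(G)$, put $Q := G/\gamma_m(G)$ (nilpotent, with the image $\bar x$ of $x$ nontrivial), let $T \triangleleft Q$ be the characteristic torsion subgroup, and take $H = Q/T$ with $\phi$ the composite quotient $G \twoheadrightarrow Q \twoheadrightarrow H$. Then $H$ is torsion-free nilpotent, and the only nontrivial point is to show $\bar x \notin T$.

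This is where I expect the main obstacle and where the \emph{infinitely many primes} hypothesis is invoked. Suppose for contradiction that $x^k \in \gamma_m(G)$ for some $k \geq 2$. I would pick a prime $p \nmid k$ with $G$ residually $p$-finite, and a surjection $\psi\colon G \twoheadrightarrow P$ onto a finite $p$-group satisfying $\psi(x) \neq id$. In the further quotient $P/\gamma_m(P)$, a finite $p$-group of nilpotency class less than $m$, one has $\psi(x^k) \in \gamma_m(P)$, hence $\overline{\psi(x)}^k = id$; since $\gcd(p,k)=1$ and $P/\gamma_m(P)$ has $p$-power order, this forces $\psi(x) \in \gamma_m(P)$. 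The contradiction requires arranging $\psi$ so that $\psi(x) \notin \gamma_m(P)$; I anticipate handling this technical step by first restricting to a finitely generated subgroup $F \subseteq G$ containing normal generators of $N$, so that $F/\gamma_m(F)$ is finitely generated nilpotent and hence residually $p$-finite for cofinitely many $p$, then choosing $p$ to avoid the finitely many exceptions (as well as the prime divisors of $k$).
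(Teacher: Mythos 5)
Your reduction to Theorem \ref{biorderable_theorem} is sound: a torsion-free nilpotent group $H$ is bi-orderable by Mal'cev, and the restriction of a bi-ordering of $H$ to the normal subgroup $\phi(N)$ is invariant under conjugation by all of $H$, since positive cones of bi-orderings are conjugation-invariant. The gap is in the construction of the quotient. Taking $m$ minimal with $x \notin \gamma_m(G)$ does not prevent $\bar{x}$ from being torsion in $G/\gamma_m(G)$: consider $G = \langle a,b,c \mid [a,b]=c^2,\ [a,c]=[b,c]=id \rangle$, a finitely generated torsion-free nilpotent group of class $2$ (hence residually $p$-finite for \emph{every} prime $p$), and $x = c$. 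Here $\gamma_2(G) = \langle c^2 \rangle$, so the minimal $m$ is $2$, yet $x^2 \in \gamma_2(G)$, and your map $G \rightarrow (G/\gamma_2(G))/T \cong \mathbb{Z}^2$ kills $x$. The proposed repair fails as well: $G/\gamma_2(G) \cong \mathbb{Z}^2 \times \mathbb{Z}/2\mathbb{Z}$ is residually $p$-finite only for $p=2$; a finitely generated nilpotent group with nontrivial torsion is residually $p$-finite for at most one prime, not for cofinitely many, so there is no room to choose $p$ coprime to $k$. What your strategy actually needs is that $G$ be residually (torsion-free nilpotent), a substantially stronger conclusion that is not delivered by these means; to salvage the outline you would have to replace ``minimal $m$ with $x \notin \gamma_m(G)$'' by ``some $m$ with $x$ non-torsion modulo $\gamma_m(G)$'' and then prove such an $m$ exists, which is essentially the whole difficulty.

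The paper sidesteps torsion-free nilpotence entirely. For each of the infinitely many primes $p_i$ it chooses a kernel $K_i$ maximal among normal subgroups for which $G/K_i$ is a finite $p_i$-group not containing the image of $N$; maximality together with the nontriviality of the centre of a finite $p$-group forces $\phi_i(N)$ to be \emph{central} in $G/K_i$. The image of $N$ in $\prod_i G/K_i$ is then a finitely generated central (hence abelian) group of positive free rank, because some normal generator of $N$ survives in infinitely many factors and therefore has infinite order; quotienting by the torsion part of this abelian group yields a surjection under which the image of $N$ is central and isomorphic to $\mathbb{Z}^k$. Centrality of $\phi(N)$ in the quotient, rather than torsion-free nilpotence of the whole quotient, is what supplies the conjugation-invariant bi-ordering required by Theorem \ref{biorderable_theorem}.
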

\begin{proof}
Suppose that $G$ is residually $p$-finite for infinitely many primes $\{ p_i \}_{i=1}^{\infty}$, and fix a finitely normally generated subgroup $N(x_1, \ldots, x_n)$ of $G$.  For each $i$, fix a normal subgroup $K_i$ of $G$ that is maximal subject to 
\begin{enumerate}
\item $N \not \subset K_i$ and \item $G / K_i$ is a finite $p_i$-group with quotient homomorphism $\phi_i$.
\end{enumerate}

To see that such a subgroup exists, observe that a finite $p_i$-group always has nontrivial centre.  Therefore if $\phi_i(N) \not \subset Z(G/K_i)$, the quotient $(G/K_i)/Z(G/K_i)$ provides a strictly smaller $p_i$-group satisfying (1) and (2) above.  Taking successive quotients, one eventually reaches the smallest such $p_i$-group, and so a maximal subgroup $K_i$.  Note that when $K_i$ is maximal, subject to (1) and (2), $\phi_i(N)$ is central.

Set $P_i = G / K_i $ and consider the canonical map $$\phi: G \rightarrow \prod_{i=1}^{\infty} P_i $$  arising from the maps $\phi_i$.

Then $\phi$ is surjective and $\phi(N)$ is a finitely generated abelian group contained in the centre of $\prod_{i=1}^{\infty} P_i $.  Say $\phi(N) = \mathbb{Z}^k \oplus K$ where $K$ is torsion. Note that $k>1$ since there exists at least one generator $g_j$ of $N$ whose image $\phi_i(g_j)$ is nontrivial for infinitely many $i$, which implies that $\phi(g_j)$ has infinite order. Let
\[ q : \prod_{i=1}^{\infty} P_i \rightarrow \left(\prod_{i=1}^{\infty} P_i \right) /K 
\]
denote the quotient map, whose image we will call $H$.
  Then $q \circ \phi :G \rightarrow H$ provides the required homomorphism, as $(q \circ \phi)(N)$ is torsion free abelian (and thus bi-orderable), and central (and thus any ordering of $q \circ \phi(N)$ is conjugation invariant).

Since $G$ satisfies the hypotheses of Theorem \ref{biorderable_theorem}, it is bi-orderable.
\end{proof}

\subsection{Recurrent orderings.}
\label{RO}
The definition of recurrent orderability found in the introduction to this paper can be reworded as follows: for every finite set $\{g_1, \ldots, g_n\}$ of positive elements and every $g \in G$, there exist $\{n_k \}_{k=1}^{\infty}$ such that $g^{-n_k}g_ig^{n_k}$ is positive for $i =1, \ldots, n$.  For the proofs of this subsection, this will be the definition we use.  

\begin{proposition}
\label{no extensions}
The class of recurrent orderable groups is not closed under extensions.
\end{proposition}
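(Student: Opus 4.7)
The plan is to establish the proposition by exhibiting an explicit short exact sequence $1 \to K \to G \to H \to 1$ in which both $K$ and $H$ are recurrent orderable, yet $G$ is not.

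Any such example is subject to strong a priori constraints. Amenability is closed under extensions, and \cite{Morris06} guarantees that every countable amenable left-orderable group is recurrent orderable; so if both $K$ and $H$ are amenable and left-orderable, then $G$ is amenable and left-orderable (extensions of left-orderable by left-orderable being left-orderable via lex), hence recurrent orderable. Thus at least one of $K$ or $H$ must be non-amenable. The free group $F_2$, which is bi-orderable via its Magnus ordering and therefore recurrent orderable (every bi-ordering being trivially recurrent), supplies a convenient non-amenable building block.

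A natural candidate to pursue is a semidirect product $G = F_2 \rtimes_{\phi} \mathbb{Z}$ for a carefully chosen $\phi \in \mathrm{Aut}(F_2)$, with $K = F_2$ and $H = \mathbb{Z}$; both factors are then visibly recurrent orderable, so the task reduces to showing that no left-ordering of $G$ is recurrent. To this end, I would suppose for contradiction that $<$ is a recurrent ordering of $G$ with positive cone $P$, and let $t \in G$ lift the generator of $H$, so that conjugation by $t^n$ acts on $K$ as $\phi^n$. Using the reworded form of recurrence given at the top of this subsection with $g = t$, one sees that for every finite subset $\{g_1, \ldots, g_n\} \subset P \cap K$ there must exist arbitrarily large $m$ with $\phi^{-m}(g_i) \in P$ simultaneously for every $i$. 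The goal is then to choose $\phi$ together with a finite family of positive elements in $K$ whose $\phi^{-m}$-orbits force incompatible constraints on $m$, preventing the existence of any common increasing sequence $\{n_k\}$.

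The principal obstacle is to make the sign obstruction apply uniformly over all left-orderings of $G$, not merely a preferred one. A natural tactic is to select $\phi$ whose dynamics---for instance, via a hyperbolic action on an appropriate quotient of $F_2$, or an action with two orbits diverging into opposing directions---produce two positive elements $g_1, g_2$ whose $\phi^{-m}$-images lie in the positive cone only on disjoint cofinal subsets of $\mathbb{Z}$ (for example complementary residue classes, or the positive vs.\ negative tails), so that no increasing sequence $\{n_k\}$ can witness simultaneous positivity. The concrete verification then amounts to a finite case analysis on how $<$ restricts to $K$ and to $\langle t \rangle$, ruling out each possibility.
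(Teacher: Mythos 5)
Your proposal has a fatal structural flaw: the candidate example $G = F_2 \rtimes_{\phi} \mathbb{Z}$ with kernel $F_2$ and quotient $\mathbb{Z}$ can never work, for \emph{any} choice of $\phi$. The very next proposition in this paper shows that whenever $1 \to K \to G \to H \to 1$ is a short exact sequence with $K$ bi-orderable and $H$ countable, amenable and left-orderable, the group $G$ admits a recurrent ordering (one applies the Poincar\'{e} recurrence theorem to the $H$-action on the compact space $\mathrm{BO}(K) \times \mathrm{LO}(H)$ and takes $P_G = P_K \cup q^{-1}(P_H)$ for a recurrent point $(P_K, P_H)$). Since $F_2$ is bi-orderable and $\mathbb{Z}$ is countable, amenable and left-orderable, every free-by-cyclic group is recurrent orderable, so the obstruction you hope to extract from the dynamics of $\phi$ provably does not exist. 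Your a priori analysis correctly identifies that one of the two ends must be non-amenable, but you then place the non-amenable group in exactly the wrong position: it must be the \emph{quotient}, not the kernel. The paper's example is $\mathbb{Z}^2 \rtimes F$ where $F$ is a finite-index free subgroup of $\mathrm{SL}(2,\mathbb{Z})$ acting naturally on the abelian kernel $\mathbb{Z}^2$; both ends are bi-orderable (hence recurrent orderable), and the fact that this group admits no recurrent ordering is \cite[Example 4.6]{Morris06}, where the hyperbolic dynamics of the $F$-action on $\mathbb{Z}^2$ furnish precisely the incompatible positivity constraints you were trying to engineer.

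Even setting aside the wrong choice of example, the proposal is not a proof: it never specifies $\phi$, never exhibits the elements $g_1, g_2$ whose conjugate orbits are supposed to conflict, and defers the essential difficulty (making the obstruction uniform over all left-orderings of $G$) to an unexecuted ``finite case analysis.'' That uniformity is the entire content of such an argument, and it is exactly what Morris's example supplies and your sketch does not.
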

\begin{proof}
Let $F$ be a finite index free subgroup of $\mathrm{SL}(2, \mathbb{Z})$ and $F \ltimes \mathbb{Z}^2$ the semidirect product arising from the natural action of $F$ on $\mathbb{Z}^2$.  From \cite[Example 4.6]{Morris06} this group admits no recurrent orderings, yet it fits into a short exact sequence
\[ 1 \rightarrow \mathbb{Z}^2 \rightarrow \mathbb{Z}^2  \ltimes F \rightarrow F \rightarrow 1
\]
where both ends of the sequence are recurrent orderable (in fact, bi-orderable).
\end{proof}

As mentioned, this means that the class of recurrent orderable groups does not satisfy Theorem \ref{classical BH}.  However if we strengthen the conditions on the terms in the short exact sequence, there is a version of this theorem that holds for recurrent orderable groups.

The proof below uses the notion of the \textit{positive cone} of an ordering.  That is, given an ordering $<$ of a group $G$, we can identify the given ordering with the set \[P = \{ g \in G \mid g>id \}.
\]
Conversely, any set $P \subset G$ satisfying $P \sqcup P^{-1} = G \setminus \{ id \}$ and $P \cdot P \subset P$ defines a left-ordering via the prescription $g<h$ if and only if $g^{-1}h \in P$.  The properties of being Conradian, recurrent, or bi-invariant can be translated into corresponding properties of positive cones; for instance the bi-orderings of a group $G$ correspond precisely to the positive cones $P$ satisfying $gPg^{-1} \subset P$ for all $g \in G$.

The set of all left-orderings of $G$ can therefore be identified with the corresponding set of positive cones in $G$, we denote this set by $\mathrm{LO}(G)$.  Similarly we define $\mathrm{BO}(G)$, the set of positive cones of bi-orderings of $G$.  Each of $\mathrm{LO}(G)$ and $\mathrm{BO}(G)$ is naturally a closed subset of the power set $\mathcal{P}(G)$ (for background, see \cite[Chapters 1 and 10]{CR16}), making each into a compact space.  The sets $V_g = \{ X \subset G \mid g \in X \}$ (where $g \in G$) form a subbasis for the topology on $\mathcal{P}(G)$, and thus a subbasis for the topology on $\mathrm{LO}(G)$ (resp. $\mathrm{BO}(G)$) is the collection of all sets $U_g = V_g \cap \mathrm{LO}(G)$ (resp. $U_g = V_g \cap \mathrm{BO}(G)$) where $g \in G$.

\begin{proposition}
Suppose that 
\[1 \rightarrow K \rightarrow G \stackrel{q}{\rightarrow} H \rightarrow 1
\]
is a short exact sequence of groups where $K$ is bi-orderable and $H$ is left-orderable, countable and amenable\footnote{Countable, amenable groups that are left-orderable admit recurrent orderings, by \cite{Morris06}.}.  Then $G$ admits a recurrent ordering.
\end{proposition}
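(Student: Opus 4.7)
My plan is to lift a bi-ordering of $K$ and a left-ordering of $H$ to a left-ordering of $G$, then use amenability of $H$ together with Poincar\'e recurrence to arrange that the lift is recurrent.

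First I would define the standard lift map
\[
\Phi : \mathrm{BO}(K) \times \mathrm{LO}(H) \to \mathrm{LO}(G), \qquad \Phi(P_K, P_H) := \{g \in G : q(g) \in P_H\} \cup P_K,
\]
and check that $\Phi(P_K, P_H)$ is the positive cone of a left-ordering of $G$. The map $\Phi$ is continuous, since the preimage of a sub-basic set $U_g \subset \mathrm{LO}(G)$ is a sub-basic set in one of the two factors according as $g \in K$ or $g \notin K$. It is also $G$-equivariant, where $G$ acts on the domain through $q$: the $G$-action on $\mathrm{BO}(K)$ by conjugation descends to an $H$-action because bi-orderings of $K$ are invariant under inner automorphisms of $K$, while the $G$-action on $\mathrm{LO}(H)$ factors through $q$ by construction.

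Next, since $H$ is countable amenable and $\mathrm{BO}(K) \times \mathrm{LO}(H)$ is a nonempty compact Hausdorff space on which $H$ acts continuously, there exists an $H$-invariant Borel probability measure $\mu$. Assuming $G$ (hence $K$) is countable, which is the setting of Morris's theorem on recurrent orderings, the space is second countable, and Poincar\'e recurrence applied to each single $h \in H$ yields a $\mu$-full-measure set of points topologically recurrent under $h$. A countable intersection over $h \in H$ then produces a $\mu$-positive-measure set of points $(P_K, P_H)$ that are topologically recurrent under every element of $H$ simultaneously. Fix any such $(P_K, P_H)$ and set $P := \Phi(P_K, P_H)$. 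To verify that $P$ is recurrent, take $g \in G$ and a finite set $F \subset P$, and let $h := q(g)$. The sub-basic neighborhood of $(P_K, P_H)$ cut out by the conditions $F \cap K \subset P_K$ and $q(F \setminus K) \subset P_H$ is returned to infinitely often by the $\langle h \rangle$-orbit; by $G$-equivariance of $\Phi$ this translates directly into $g^{-n} F g^n \subset P$ for infinitely many $n$, which is the required recurrence condition.

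The main obstacle is that no fixed bi-ordering of $K$ is preserved under $G$-conjugation in general, so independently fixing $P_K$ and $P_H$ cannot be expected to produce a recurrent lift. The measure-theoretic argument on the product space is precisely what couples the two choices, so that a single sequence of powers of any given $g$ witnesses recurrence on both factors at once. A secondary technicality is the transfer from measure-theoretic to topological recurrence, which is handled by second countability in the countable setting (and could be addressed in greater generality by restricting to the countable subgroup generated by the finite data involved in any particular recurrence condition).
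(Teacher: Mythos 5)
Your proposal is correct and follows essentially the same route as the paper: the $H$-action on $\mathrm{BO}(K)\times\mathrm{LO}(H)$ (using that bi-orderings of $K$ are inner-automorphism invariant so the action factors through $q$), an invariant measure from amenability, Poincar\'{e} recurrence to extract a simultaneously recurrent point, and the lexicographic lift $P_K\cup q^{-1}(P_H)$. Your packaging of the final verification via equivariance of the lift map, and your explicit attention to the countability needed for the recurrence argument, are only presentational refinements of the paper's direct case check.
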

\begin{proof}
Consider the action of $H$ on the space $\mathrm{BO}(K) \times \mathrm{LO}(H)$, where $\mathrm{BO}(K)$ and $\mathrm{LO}(H)$ are the spaces of bi- and left-orderings of $K$ and $H$ respectively.  Here, $H$ acts by outer automorphisms on $\mathrm{BO}(K)$ and by conjugation on $\mathrm{LO}(H)$. That is, if the action of $H$ on $K$ is given by $\psi: H \rightarrow \mathrm{Out}(K)$ where $\psi$ sends  each $h\in H$ to the outer automorphism $\phi_h$, then the action of $H$ on $\mathrm{BO}(K) \times \mathrm{LO}(H)$ is given by $h(P_K, P_H) = (\phi_h(P_K), h^{-1}P_Hh)$ for all $(P_K, P_H ) \in \mathrm{BO}(K) \times \mathrm{LO}(H)$.  Since every outer automorphism of $K$ acts on $\mathrm{BO}(K)$ as a homeomorphism and the conjugation action of $H$ on $\mathrm{LO}(H)$ is also an action by homeomorphisms, the action of $H$ on  $\mathrm{BO}(K) \times \mathrm{LO}(H)$ is by homeomorphisms.

Since $\mathrm{BO}(K) \times \mathrm{LO}(H)$ is a compact Hausdorff space and $H$ is amenable, there is a probability measure $\mu$ on $\mathrm{BO}(K) \times \mathrm{LO}(H)$ that is invariant under the $H$-action, meaning we can apply the Poincar\'{e} recurrence theorem to this action as in \cite{Morris06}.  As a result, there exists a point $(P_H, P_K) \in \mathrm{BO}(K) \times \mathrm{LO}(H)$ that satisfies:  For every $h \in H$ and every open set $U$ containing $(P_H, P_K)$ there exists an increasing sequence of positive integers $\{n_k\}_{k=1}^{\infty}$ such that $h^{n_k}(P_H, P_K) \in U$ for all $k$.  Set $P_G = P_K \cup q^{-1}(P_H)$, we next check that $P_G$ is the positive cone of a recurrent ordering.

Let $g_1, \ldots, g_n \in P_G$ be given.  Suppose that we have enumerated the $g_i$ so that $g_1, \ldots, g_r \in K$ and $q(g_i) \in P_H$ for $i \geq r+1$, where $1 \leq r \leq n$.  Now let $g \in G$ be given.  

If $g \in K$, then $g g_i g^{-1} \in P_K$ for $i =1, \ldots, r$ since $P_K$ is the positive cone of a bi-ordering. On the other hand for $j = r+1, \ldots, n$ we have $q(g g_j g^{-1}) = q(g_j) \in P_H$.  In either case, $g g_i g^{-1} \in P_G$ for all $i=1, \ldots, n$.

If $g \notin K$ then suppose $q(g) =h$ and consider the neighbourhood 
\[V = \bigcap_{i=1}^r U_{g_i} \cap \bigcap_{j=r+1}^n U_{g_j}
\]
of $(P_K, P_H)$ in $\mathrm{BO}(K) \times \mathrm{LO}(H)$. Choose $\{n_k\}_{k=1}^{\infty}$ such that 
\[h^{n_k}(P_K, P_H) = (\phi_h^{n_k}(P_K), g^{-n_k} P_H g^{n_k}) \in V
\] for all $k$.  Then $g_1, \ldots, g_r \in  \phi_h^{n_k}(P_K)$ and $g_{r+1}, \ldots, g_n \in g^{-n_k} P_H g^{n_k}$.  Noting that $\phi_h^{n_k}(P_K) = g^{-n_k} P_K g^{n_k}$ one finds that $g^{n_k} g_i g^{-n_k} \in P_G$ for all $i =1 , \ldots, n$, so the positive cone $P_G$ is recurrent.
\end{proof}

As a sample application of the previous proposition, we have the following.

\begin{corollary}
Suppose that $G$ and $H$ are countable amenable left-orderable groups.  Then the free product $G*H$ admits a recurrent ordering.
\end{corollary}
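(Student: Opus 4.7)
The plan is to realize $G * H$ as an extension whose kernel is bi-orderable and whose quotient is countable, amenable, and left-orderable, so that the previous proposition applies directly. The natural candidate for such an extension is the short exact sequence
\[ 1 \rightarrow K \rightarrow G*H \rightarrow G \times H \rightarrow 1, \]
where the surjection $G*H \to G \times H$ is the one induced by the inclusions of $G$ and $H$ into the direct product, and $K$ is its kernel.

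First I would verify that the quotient $G \times H$ has the properties required in the previous proposition: it is countable (a product of two countable groups), amenable (the class of amenable groups is closed under finite direct products, which is immediate from averaging two invariant means), and left-orderable (equip it with the lexicographic left-ordering constructed from any left-orderings of $G$ and $H$).

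Next I would invoke the classical fact that the kernel $K$ (often called the cartesian subgroup of the free product) is a free group; one standard reference is Magnus--Karrass--Solitar, where $K$ is exhibited as freely generated by the commutators $[g,h]$ with $g \in G \setminus \{id\}$, $h \in H \setminus \{id\}$. Since every free group is bi-orderable (Magnus's theorem, using the Magnus embedding into a ring of formal power series), $K$ is bi-orderable.

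With the extension in hand, the previous proposition produces a recurrent ordering on $G*H$, completing the proof. The only nontrivial input is the classical identification of the kernel as a free group; everything else is a direct check of hypotheses. I expect this to be the main (and essentially only) obstacle, but it is a well-known result that can simply be cited.
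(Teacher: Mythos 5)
Your proposal is correct and follows exactly the paper's argument: the same short exact sequence $1 \rightarrow K \rightarrow G*H \rightarrow G \times H \rightarrow 1$ with the free (hence bi-orderable) cartesian subgroup as kernel and the countable amenable left-orderable $G \times H$ as quotient, followed by an application of the preceding proposition. The extra detail you supply (the free generating set of the kernel, Magnus's theorem) is just a fuller justification of the facts the paper cites as well known.
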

\begin{proof}
There is a well-known short exact sequence
\[ 1 \rightarrow F \rightarrow G*H \rightarrow G \times H \rightarrow 1
\]
where $F$ is a free group, hence bi-orderable.  Since $G \times H$ is left-orderable and amenable whenever $G$ and $H$ have these properties, the result follows.
\end{proof}

This prompts the following question.

\begin{question}
If $(G, <_G)$ and $(H, <_H)$ are groups equipped with recurrent orderings, does the free product $G*H$ admit a recurrent ordering that extends the orderings $<_G$ and $<_H$?
\end{question}

It also follows that many Conradian left-orderable $3$-manifold groups are in fact recurrent orderable.

\begin{corollary}
Suppose that $M$ is a $3$-manifold that fibres over the circle with fibre an orientable surface $\Sigma$.  Then $\pi_1(M)$ admits a recurrent ordering.
\end{corollary}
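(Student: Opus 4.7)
The plan is to reduce the statement directly to the preceding proposition by exhibiting $\pi_1(M)$ as the middle term of a short exact sequence whose kernel is bi-orderable and whose quotient is $\mathbb{Z}$. A fibration $\Sigma \hookrightarrow M \to S^1$ with fibre a connected orientable surface $\Sigma$ yields, via the long exact sequence in homotopy (using $\pi_2(S^1)=0$ and $\pi_0(\Sigma)=0$), the short exact sequence
\[ 1 \longrightarrow \pi_1(\Sigma) \longrightarrow \pi_1(M) \longrightarrow \pi_1(S^1) = \mathbb{Z} \longrightarrow 1. \]
The quotient $\mathbb{Z}$ is manifestly countable, amenable, and left-orderable, so it only remains to verify that $\pi_1(\Sigma)$ is bi-orderable; then the preceding proposition applies verbatim.

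For the kernel, I would split into cases by the topology of $\Sigma$. If $\Sigma$ has nonempty boundary, then $\pi_1(\Sigma)$ is free, and free groups are well-known to be bi-orderable (for instance, by the Magnus expansion, or as a consequence of being residually torsion-free nilpotent). If $\Sigma$ is closed of genus $g \geq 1$, then $\pi_1(\Sigma)$ is also bi-orderable; this is a classical result that can be obtained from residual torsion-free nilpotence of surface groups, as in work of Baumslag, or via the explicit orderings constructed by Perron--Rolfsen and Rolfsen--Wiest. If $\Sigma = S^2$, then $\pi_1(M) \cong \mathbb{Z}$, which is itself bi-orderable and hence trivially recurrent orderable, so the conclusion is immediate.

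Having verified in each case that the kernel $\pi_1(\Sigma)$ is bi-orderable, the preceding proposition immediately produces a recurrent ordering on $\pi_1(M)$. The only genuine content beyond quoting the proposition is the bi-orderability of surface groups, which is the place where orientability of $\Sigma$ is used (non-orientable surface groups, such as the Klein bottle group, fail to be bi-orderable). I do not anticipate a real obstacle: the argument is essentially a citation of the classical bi-orderability of orientable surface groups, packaged through the fibration sequence so as to feed directly into the machinery developed above.
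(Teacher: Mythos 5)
Your proof is correct and follows exactly the paper's route: apply the preceding proposition to the homotopy exact sequence $1 \to \pi_1(\Sigma) \to \pi_1(M) \to \pi_1(S^1) \to 1$, using that $\mathbb{Z}$ is countable, amenable and left-orderable and that orientable surface groups are bi-orderable (the paper cites Rolfsen--Wiest for the latter, where you supply a short case analysis). No substantive difference.
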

\begin{proof}
Since $\mathbb{Z}$ is left-orderable and amenable, and the fundamental group of every orientable surface is bi-orderable \cite{RW01}, the previous theorem applies to the homotopy exact sequence of the fibration 
\[1 \rightarrow \pi_1(\Sigma) \rightarrow \pi_1(M) \rightarrow \pi_1(S^1) \rightarrow 1.
\]
\end{proof}

Beyond these results, developing a Burns-Hale type theorem for the class of recurrent orderable groups seems particularly difficult.  The obstruction is twofold: First, it is unknown whether or not the property of admitting a recurrent ordering is a local property \cite[Question 3.42]{Navas10}, and second, the set of recurrent orderings is not a closed (hence not compact) subset of $\mathrm{LO}(G)$.

\begin{example}\cite[cf. Example 3.40]{Navas10}  Here is a simple example of a sequence of recurrent orderings converging to a non-recurrent ordering in the space of left orderings of a group $G$.  Consider the abelian group $\mathbb{Z}[t, t^{-1}]$ and let $\mathbb{Z} = \langle z \rangle$ act on $\mathbb{Z}[t, t^{-1}]$ by multiplication by $t$.  Form the semidirect product $G = \mathbb{Z}[t, t^{-1}] \ltimes \langle z \rangle$ and construct positive cones $Q_i \subset \mathbb{Z}[t, t^{-1}]$ as follows.

Given $\sum_{k=1}^r a_k t^{n_k}$, suppose that $n_1 < \ldots < n_r$ and that the $a_k$ are nonzero.  Suppose that $n_r = mi +j$ with $0 \leq j < i$ and $m$ an even integer.  Declare $\sum_{k=1}^r a_k t^{n_k} \in Q_i$ if $a_r >0$.  Otherwise, if $n_r = mi +j$ where $1 \leq j < i$ and $m$ is an odd integer, declare $\sum_{k=1}^r a_k t^{n_k} \in Q_i$ if $a_r <0$.  The positive cones $Q_i$ converge (as a sequence in the space $\mathrm{LO}(\mathbb{Z}[t, t^{-1}])$) to the positive cone 
\[ Q = \left\{ \sum_{k=1}^r a_k t^{n_k} \mid \mbox{$n_r \geq 0$ and $a_r > 0$ or $n_r<0$ and $a_r< 0$} \right\}.
\]

Now using the short exact sequence 
\[ 1 \rightarrow \mathbb{Z}[t, t^{-1}] \rightarrow G \rightarrow \langle z \rangle \rightarrow 1
\]
create a sequence of positive cones $P_i \subset G$ lexicographically, by setting $(\sum_{k=1}^r a_k t^{n_k}, z^m) \in P_i$ if $m>0$ or $m=0$ and $\sum_{k=1}^r a_k t^{n_k} \in P_i$.  Similarly create a positive cone $P$ using this lexicographic construction and the positive cone $Q$ on the subgroup $\mathbb{Z}[t, t^{-1}]$.

By construction each of the positive cones $P_i$ is recurrent (the orbit of each $P_i$ under the action of $G$ on $\mathrm{LO}(G)$ has $2i$ elements), yet their limit (as a sequence in $\mathrm{LO}(G)$) is the positive cone $P$, which is not recurrent.
\end{example}

It follows that the compactness arguments needed for a Burns-Hale type theorem (e.g. see Proposition \ref{pre order}) do not apply to the set of recurrent orderings, as they do in the cases of left, Conradian, circular and bi-orderings.  

\subsection{Circularly orderable groups.}
For a group $G$, if $t \in G^3$ and $y \in G$ we will use the notation $y \cdot t$ to indicate the component-wise multiplication of a group element on triples. 

Recall that a group $G$ is \textit{circularly orderable} if and only if there exists a function $c: G^3 \rightarrow \{ \pm1, 0\}$ satisfying:
\begin{enumerate}
\item $c(x_1, x_2, x_3) = 0$ if and only if $x_i = x_j$ for some $i \neq j$,
\item $c$ satisfies a cocyle condition:
\[c(x_1, x_2, x_3) - c(x_1, x_2, x_4) + c(x_1, x_3, x_4) - c(x_2, x_3, x_4) = 0
\]
for all $x_1, x_2, x_3, x_4 \in G$,
\item $c$ is left-invariant: For every triple $t \in G^3$ we have
\[ c(y \cdot t) = c(t)
\]
for all $y \in G$.
\end{enumerate}

Any function $c$ satisfying the conditions above is called a \textit{left circular ordering} of $G$ (hereafter shortened to ``circular ordering"), and we denote the set of all such functions by $\mathrm{CO}(G)$.   The set $\mathrm{CO}(G)$ is a subset of $\{ 0, \pm1\}^{G^3}$, and if we equip  $\{ 0, \pm1\}$ with the discrete topology and $\{ 0, \pm1\}^{G^3}$ with the product topology, the subspace topology inherited by $\mathrm{CO}(G)$ makes it into a compact space (for a thorough introduction to $\mathrm{CO}(G)$, see \cite{BS15}).  For each triple of group elements $t = (g_1, g_2, g_3) \in G^3$ and for each $i \in \{ \pm 1, 0\}$, set 
\[ U_t^i =  \{ \phi: G^3 \rightarrow \{ \pm1, 0\} \mid \phi(t) = i \}.
\]
The collection of all such sets form a subbasis for the topology on $\mathrm{CO}(G)$.

We use this topology to develop a condition on finite subsets of $G$ that will guarantee that $G$ is circularly orderable.  The techniques follow a similar development of ideas found in \cite{Conrad59, Navas10, CR16} in the cases of left, Conradian and bi-ordered groups.

\begin{definition}
Let $G$ be a group with generating set $S$ and $G_k$ the set of all words of length less than or equal to $k\in \mathbb{N}$.  A \textit{length $k$ circular pre-order} is a function $c:G_k^3 \rightarrow \{ 0, \pm 1 \}$ satisfying:
\begin{enumerate}
\item $c(x_1, x_2, x_3) = 0$ if and only if $x_i = x_j$ for some $i \neq j$,
\item $c$ does not violate the cocycle condition, 
\item if $g \in G_k$ and $t \in G_k^3$ then $c(g \cdot t) = c(t)$ whenever $g \cdot t \in G_k^3$.
\end{enumerate}
\end{definition}

Before the next proposition we introduce some notation that will be useful during the proof.  For a group set $X$, the \textit{big diagonal} of $X^3$ is the set $\Delta(X^3) = \{ (x_1, x_2, x_3) \mid x_i = x_j \mbox{ for some } i \neq j \}$.  

\begin{proposition}
\label{pre order}
A group $G$ is circularly orderable if and only if it admits a length $k$ circular pre-order for every $k \geq 1$.
\end{proposition}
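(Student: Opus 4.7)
The forward direction is routine: if $c: G^3 \to \{0, \pm 1\}$ is a circular ordering of $G$, then its restriction to $G_k^3$ satisfies each of the three defining properties of a length $k$ pre-order, since those properties hold globally on $G$.

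For the converse, my plan is a standard compactness argument in the ambient space $\{0, \pm 1\}^{G^3}$, equipped with the product topology (where $\{0, \pm 1\}$ carries the discrete topology); this space is compact by Tychonoff's theorem. For each $k \geq 1$, I let $C_k$ denote the set of functions $c \in \{0, \pm 1\}^{G^3}$ whose restriction to $G_k^3$ is a length $k$ circular pre-order. By hypothesis, a length $k$ pre-order exists, and extending it arbitrarily to the rest of $G^3$ yields an element of $C_k$, so $C_k$ is nonempty. Each defining condition of a pre-order ($c$ vanishes exactly on the big diagonal $\Delta(G_k^3)$; the cocycle identity holds on quadruples in $G_k$; and left-invariance $c(g \cdot t) = c(t)$ whenever $g \in G_k$, $t \in G_k^3$, and $g \cdot t \in G_k^3$) constrains the values of $c$ at only finitely many coordinates at a time, and hence is a closed condition in the product topology. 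Thus each $C_k$ is an intersection of closed sets, and is itself closed. Since a length $(k+1)$ circular pre-order restricts to a length $k$ one, we moreover have $C_{k+1} \subseteq C_k$.

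The nested family $\{C_k\}_{k \geq 1}$ of nonempty closed subsets of the compact space $\{0, \pm 1\}^{G^3}$ therefore has nonempty intersection, and I pick any $c \in \bigcap_{k \geq 1} C_k$. To see $c \in \mathrm{CO}(G)$, observe that $G = \bigcup_k G_k$, so every triple in $G^3$ and every quadruple in $G^4$ lies in $G_k^3$, respectively $G_k^4$, for some $k$; the vanishing condition and the cocycle identity for $c$ at such a tuple then follow from the pre-order properties at that level. Similarly, for any $y \in G$ and $t \in G^3$, choosing $k$ large enough that $y$, the entries of $t$, and the entries of $y \cdot t$ all lie in $G_k$ reduces left-invariance of $c$ at $(y,t)$ to left-invariance of the pre-order at level $k$.

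The only technical point to verify carefully is that each pre-order axiom really does cut out a closed subset of $\{0, \pm 1\}^{G^3}$; once the correct finite-coordinate reformulation is identified, there is no further obstacle, and the argument is otherwise the exact analogue of the compactness proofs of the Conrad-type characterizations in \cite{Conrad59, Navas10, CR16}.
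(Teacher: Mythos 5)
Your proof is correct and follows essentially the same route as the paper's: both define the nested closed sets of functions on $\{0,\pm1\}^{G^3}$ whose restrictions to $G_k^3$ are length $k$ pre-orders and apply compactness. The only difference is that you spell out the nonemptiness, nesting, and final verification steps in slightly more detail than the paper does.
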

\begin{proof}
If $G$ is circularly orderable it obviously admits length $k$ pre-orders for all $k$ by restricting any given circular ordering to $G_k^3$.

On the other hand, suppose that $G$ admits a length $k$ circular pre-order for every $k \geq 1$.  For each $k \geq 1$, set
 \[P_k = \{c : G^3 \rightarrow \{0, \pm1\} \mid c|_{G_k^3} \mbox{is a circular pre-order}. \}\]
 One can check that $P_k$ is closed in $\{0, \pm 1 \}^{G^3}$, for example let us consider condition (1) above.  Set $\Delta_k = \Delta( G_k^3)$.  A function $c$ violates condition (1) if and only if there exists a triple $t \in \Delta_k$ such that $c(t) =  \pm 1$ or a triple $t \in G_k^3 \setminus \Delta_k$ such that $c(t) = 0$.  So the set of functions $c :G^3 \rightarrow \{ 0, \pm1 \}$ that are \textit{not} length $k$ circular pre-orders is
\[ \bigcup_{t \in \Delta_k} U_t^1 \cup \bigcup_{t \in \Delta_k} U_t^{-1} \cup \bigcup_{t \in G_k^3 \setminus \Delta} U_t^0,
\]
an open set.  So condition (1) defines a closed subset of $\{0, \pm 1 \}^{G^3}$, as do conditions (2) and (3).

Now note that $P_{k+1} \subset P_k$ for all $k$, and thus $\bigcap_{k=1}^{\infty} P_k$ is an intersection of nested closed subsets of a compact space, hence nonempty. Any element in $\bigcap_{k=1}^{\infty} P_k$ is a circular ordering of $G$, and in fact one can check that $\bigcap_{k=1}^{\infty} P_k = \mathrm{CO}(G)$.
\end{proof}

The next result is well-known, we include a proof for the sake of completeness \cite[cf. Theorem 1.44]{CR16}.

\begin{lemma}
\label{fg circ}
A group $G$ is circularly orderable if and only if each of its finitely generated subgroups is circularly orderable.
\end{lemma}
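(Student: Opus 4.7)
The forward direction will be immediate: if $c: G^3 \to \{0, \pm 1\}$ is a circular ordering of $G$ and $H \leq G$ is any subgroup, then the restriction $c|_{H^3}$ inherits axioms (1)--(3) verbatim. So the substance is in the converse, and my plan is to run a Tychonoff-style compactness argument on the compact space $\{0, \pm 1\}^{G^3}$ (equipped with the product of the discrete topology on $\{0, \pm 1\}$), mirroring the proof of Proposition \ref{pre order}.

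For each finitely generated subgroup $H \leq G$, I will define
\[ X_H = \{\, c \in \{0, \pm 1\}^{G^3} \mid c|_{H^3} \text{ is a circular ordering of } H \,\}. \]
First I would check that $X_H$ is closed: each of the three axioms of a circular ordering on $H^3$ is, when negated, a union of finite intersections of the subbasic open sets $U_t^i$ (taken over triples/quadruples drawn from the appropriate powers of $H$), exactly as shown in the proof of Proposition \ref{pre order} for axiom (1). Next I would observe that $X_H$ is nonempty, since by hypothesis $H$ admits a circular ordering, which I can extend arbitrarily (e.g. by $0$) off $H^3$ to obtain a point of $X_H$. The family $\{X_H\}$ is directed under inclusion in the sense that if $H_1, \ldots, H_n$ are finitely generated then so is $H = \langle H_1 \cup \cdots \cup H_n \rangle$, and $X_H \subseteq \bigcap_i X_{H_i}$; so $\{X_H\}$ has the finite intersection property. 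By compactness of $\{0, \pm 1\}^{G^3}$, there exists some $c \in \bigcap_H X_H$.

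To finish, I would verify that this $c$ is a circular ordering of $G$. The point is that each axiom is "local": axioms (1) and (2) only involve three or four elements of $G$, and axiom (3) involves at most four elements (the group element $y$ and the entries of $t$). For any such finite collection, the subgroup $H$ they generate is finitely generated, so $c|_{H^3}$ is a circular ordering of $H$, and the needed instance of each axiom holds inside $H^3$. The main (minor) obstacle is the bookkeeping verification that axioms (2) and (3) really do define closed subsets of $\{0, \pm 1\}^{G^3}$, but this is a routine variant of the corresponding verification in Proposition \ref{pre order}; no genuinely new idea is required.
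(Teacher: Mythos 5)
Your proposal is correct and follows essentially the same route as the paper: the paper indexes its closed sets $\mathcal{Q}(F)$ by finite subsets $F \subset G$ rather than by finitely generated subgroups $H$, but this is an immaterial reparametrization, and the closedness, nonemptiness, finite-intersection-property, and compactness steps are identical. Your explicit remarks on extending a circular ordering of $H$ by $0$ off $H^3$ and on the locality of the axioms simply spell out details the paper leaves implicit.
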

\begin{proof}  Suppose every finitely generated subgroup admits a circular ordering.  For each finite subset $F \subset G$, set
\[ \mathcal{Q}(F) = \{ c: G^3 \rightarrow \{0, \pm 1\}  \mid  c|_{\langle F \rangle} \mbox{ is a circular ordering of $\langle F \rangle$} \}. 
\]
One checks that $\mathcal{Q}(F)$ is a closed nonempty subset $\{ 0, \pm 1 \}^{G^3}$.  The collection of all sets $\mathcal{Q}(F)$ has the finite intersection property, since for any collection $F_1, \ldots, F_n$ of finite subsets
\[ \mathcal{Q}(F_1 \cup \ldots \cup F_n) \subset \bigcap_{i=1}^n \mathcal{Q}(F_i).
\]
Thus $\bigcap_{F \subset G \tiny{\mbox{ finite}}} \mathcal{Q}(F)$ is nonempty by compactness, the elements of which are precisely circular orderings of $G$.
\end{proof}

For a collection of triples $T \subset G^3$, let $comp(T) = \bigcup_{i=1}^3 p_i(T)$, where $p_i : G^3 \rightarrow G$ is the $i$-th projection map (so it is the collection of all components of all triples in $T$).

\begin{proposition}
\label{circle exponents}
Let $G$ be a group, and $\Delta$ the big diagonal of $G^3$.  Then $G$ is circularly orderable if and only if for every finite set $T = \{t_1, \ldots, t_n \} \subset G^3 \setminus \Delta$ there exists $\epsilon_i = \pm1$ for $i =1,\ldots, n$ such that $c: T \rightarrow \{0, \pm1\}$ defined by $c(t_i) = \epsilon_i$ satisfies:
\begin{enumerate}
\item $c$ does not violate the cocycle condition, and
\item for all $y \in comp(T)$ and $t \in T$ if $y \cdot t \in T$ then $c(t) = c(y \cdot t)$.
\end{enumerate}
\end{proposition}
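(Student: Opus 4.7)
The plan is to prove the forward direction by a straightforward restriction and the reverse direction by combining Lemma \ref{fg circ} with Proposition \ref{pre order}. For the forward implication, given any circular ordering $c^\ast$ of $G$ and any finite $T = \{t_1, \ldots, t_n\} \subset G^3 \setminus \Delta$, I set $\epsilon_i := c^\ast(t_i)$. Conditions (1) and (2) then become direct restrictions to $T$ of the cocycle identity and left-invariance of $c^\ast$, so no real work is needed.

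For the reverse implication, Lemma \ref{fg circ} reduces the problem to the case of a finitely generated subgroup, so I fix a finite generating set $S$ and aim to apply Proposition \ref{pre order}: it suffices to construct a length $k$ circular pre-order on the (finite) set $G_k$ of words of length at most $k$, for every $k \geq 1$. I will apply the hypothesis to the finite subset $T := G_k^3 \setminus \Delta(G_k^3) \subset G^3 \setminus \Delta$, obtaining exponents $\epsilon_i = \pm 1$ and a function $c : T \to \{\pm 1\}$ satisfying (1) and (2). Extending $c$ to $\tilde c : G_k^3 \to \{0, \pm 1\}$ by declaring $\tilde c(t) = 0$ for every $t \in \Delta(G_k^3)$ should produce the desired pre-order.

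Axioms (1) and (3) of the pre-order then drop out quickly. Axiom (1) is built into the construction of $\tilde c$. For axiom (3), given $g \in G_k$ and $t \in G_k^3$ with $g \cdot t \in G_k^3$, note that multiplication sends $\Delta$ into $\Delta$ and $T$ into $T$; if $t \in \Delta$ then $g \cdot t \in \Delta$ and both values are zero, otherwise both lie in $T$ and hypothesis (2) applies with $y = g$, since $comp(T) = G_k$ whenever $|G_k| \geq 3$ (the exceptional cases with $|G_k| \leq 2$ occur only for the trivial group or $\mathbb{Z}/2\mathbb{Z}$, both of which are manifestly circularly orderable).

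The hard part will be axiom (2), the cocycle identity for all 4-tuples in $G_k^4$. For 4-tuples of pairwise distinct entries all four constituent triples lie in $T$, and the identity is exactly the content of hypothesis (1). For 4-tuples in which some coordinates coincide, however, the cocycle specializes to nontrivial symmetry statements on $c$: the case $x_1 = x_3$ collapses the cocycle to antisymmetry $c(x_1, x_2, x_4) + c(x_2, x_1, x_4) = 0$, while $x_1 = x_4$ collapses it to cyclic invariance $c(x_1, x_2, x_3) = c(x_2, x_3, x_1)$. The proof therefore hinges on reading hypothesis~(1) broadly—namely, that the extension $\tilde c$ satisfies the four-term cocycle sum for every 4-tuple with coordinates in $comp(T)$ once triples in $\Delta$ are assigned value zero—an interpretation which is consistent with the forward direction and which absorbs these degenerate symmetries into the hypothesis. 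Under that reading axiom (2) follows immediately, and the pre-order $\tilde c$ feeds into Proposition \ref{pre order} to complete the argument.
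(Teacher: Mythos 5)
Your proof takes the same route as the paper: the forward direction by restricting a given circular ordering, and the reverse direction by reducing to finitely generated subgroups via Lemma \ref{fg circ}, applying the hypothesis to $T = G_k^3 \setminus \Delta(G_k^3)$, extending by $0$ on the diagonal, and feeding the resulting length $k$ pre-order into Proposition \ref{pre order}. The paper's own proof simply asserts in one line that the resulting function is a length $k$ pre-order, so your careful handling of the degenerate $4$-tuples is a genuine addition rather than a detour: you are right that condition (1) must be read as applying to the extension $\tilde c$ (with value $0$ on $\Delta$) over all $4$-tuples whose constituent triples lie in $T \cup \Delta$, because under the narrower reading --- requiring the cocycle sum to vanish only when all four constituent triples lie in $T$ itself --- the constant function $c \equiv 1$ would satisfy both conditions for every group and every $T$, making the proposition false. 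That broad reading is consistent with the forward direction and is the one implicitly verified in the proof of Theorem \ref{circ theorem}, so your argument is complete as written.
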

\begin{proof}
If $G$ admits a circular ordering $c$, then given any finite set $\{t_1, \ldots, t_n \} \subset  G^3 \setminus \Delta$ we define $\epsilon_i = c(t_i)$.  This choice clearly satisfies the required properties. 

For the other direction, we proceed as follows.  Suppose that for every finite subset of $G$ there exist $\epsilon_i$ as in the statement of the proposition.  By Lemma \ref{fg circ} we can assume that $G$ is finitely generated.   Choose a finite generating set of $G$ and consider $T = G_k^3 = \{ t_1, \ldots, t_n\}$.  Choose exponents $\epsilon_i$ satisfying the hypotheses of the proposition.  The resulting function $c(t_i) = \epsilon_i$ for all $t_i \in G_k^3$ is a length $k$ pre-order, and the conclusion follows from Proposition \ref{pre order}.
\end{proof}

Recall that a semigroup $S \subset G$ is called \textit{antisymmetric} if $S \cap S^{-1} = \emptyset$.

\begin{theorem}
\label{circ theorem}
A group $G$ is circularly orderable if and only if for every finite subset $X \subset G$ there exists a homomorphism $\phi : \langle X \rangle \rightarrow C$ onto a circularly ordered group and an antisymmetric semigroup $S \subset \ker(\phi)$ such that  $X^{-1}X \cap \ker(\phi) \subset (S \cup S^{-1} \cup \{ id \})$.
\end{theorem}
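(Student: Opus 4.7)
The forward implication is immediate: if $G$ is circularly orderable then so is every subgroup $\langle X \rangle$, so we may take $\phi$ to be the identity on $\langle X \rangle$ together with $S = \emptyset$; then $\ker(\phi) = \{id\}$ and the inclusion $X^{-1}X \cap \ker(\phi) \subset \{id\}$ is automatic. For the reverse direction the plan is to verify the hypotheses of Proposition \ref{circle exponents}. Fix a finite $T = \{t_1, \ldots, t_n\} \subset G^3 \setminus \Delta$, set $X = \mathrm{comp}(T)$, and apply the assumed property to obtain a homomorphism $\phi: \langle X \rangle \to C$ onto a circularly ordered group $(C, c_C)$ together with an antisymmetric semigroup $S \subset \ker(\phi)$ satisfying $X^{-1}X \cap \ker(\phi) \subset S \cup S^{-1} \cup \{id\}$.

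The semigroup $S$ defines a relation $\prec$ on $\ker(\phi)$ by $g \prec h$ iff $g^{-1}h \in S$, and this relation is transitive (since $S$ is a semigroup) and antisymmetric (since $S \cap S^{-1} = \emptyset$); the containment hypothesis then guarantees that $\prec$ restricts to a total linear order on each nonempty fibre $\phi^{-1}(p) \cap X$. Motivated by the lexicographic construction of a cyclic order from the data $(c_C, \prec)$, I would define $\epsilon_t \in \{\pm 1\}$ for each $t = (a,b,d) \in T$ as follows. If $\phi(a), \phi(b), \phi(d)$ are pairwise distinct, set $\epsilon_t = c_C(\phi(a), \phi(b), \phi(d))$. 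If exactly two of the images coincide, break the tie using $\prec$ on the common fibre, with the signs dictated by the lex convention: $\epsilon_t = +1$ iff $a \prec b$ when $\phi(a) = \phi(b)$, iff $d \prec a$ when $\phi(a) = \phi(d)$, and iff $b \prec d$ when $\phi(b) = \phi(d)$. If all three images coincide, let $\epsilon_t = +1$ precisely when $(a,b,d)$ agrees with the cyclic order on $\{a,b,d\}$ induced by the total order $\prec$ on the fibre.

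Left-invariance of the assignment $t \mapsto \epsilon_t$ is then immediate, since $c_C$ is left-invariant on $C$ and the differences $a^{-1}b$, $a^{-1}d$, $b^{-1}d$ are preserved under left multiplication. The principal obstacle, and most of the work, will be verifying the cocycle condition for every 4-tuple $(x_1, x_2, x_3, x_4)$ drawn from $\mathrm{comp}(T)$. I expect to handle this by case analysis on the partition of $\{1,2,3,4\}$ induced by $i \sim j \Leftrightarrow \phi(x_i) = \phi(x_j)$: all singletons; a single non-trivial pair; two disjoint pairs; a triple plus a singleton; and a single block of four. In each case the alternating cocycle sum should collapse either to the cocycle identity for $c_C$ on $C$, to the cocycle identity for the cyclic order induced by a linear order on a four-element set, or to pairwise cancellation of $\pm 1$ contributions from the $S$-rule. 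Once cocycle and left-invariance are in hand, Proposition \ref{circle exponents} yields that $G$ is circularly orderable.
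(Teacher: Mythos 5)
Your proposal is correct and follows essentially the same route as the paper: reduce to Proposition \ref{circle exponents}, define $c$ on triples by pulling back the circular ordering of $C$ when the $\phi$-images are distinct and breaking ties within fibres using the order determined by $S$ (your ``cyclic order induced by $\prec$'' rule in the all-coincident case is equivalent to the paper's parity count of $\{g_1^{-1}g_2, g_2^{-1}g_3, g_1^{-1}g_3\} \cap S$). Both you and the paper defer the cocycle verification to a routine case analysis, so there is nothing further to flag.
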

\begin{proof}
For the nontrivial direction of the proof, let $T \subset G^3 \setminus \Delta(G^3)$ be any finite subset, and set $X = comp(T)$.  Choose a surjective homomorphism $\phi: \langle X \rangle \rightarrow C$ and let $d$ be the circular ordering of $C$ and $S$ a subsemigroup satisfying the hypotheses of the theorem.  Let $(g_1, g_2, g_3)  \in T$ and define a function $c:T \rightarrow \{ 0, \pm 1\}$ as follows:  
\begin{enumerate}
\item If $\phi(g_1), \phi(g_2), \phi(g_3)$ are distinct then set $$c(g_1, g_2, g_3) = d(\phi(x_1), \phi(x_2), \phi(x_3)).$$
\item If two of $\phi(g_1), \phi(g_2), \phi(g_3)$ are equal, then we may assume that $\phi(g_1) = \phi(g_2)$.  Then $g_1^{-1} g_2  \in S \cup S^{-1}$, set $c(g_1, g_2, g_3) = 1$ if $g_1^{-1} g_2 \in S$ and $c(g_1, g_2, g_3) = -1$ if $g_1^{-1} g_2 \notin S$.
\item If $\phi(g_1)= \phi(g_2)= \phi(g_3)$ then $g_1^{-1}g_2, g_2^{-1}g_3, g_1^{-1}g_3 \in S \cup S^{-1}$.  Define $c(g_1, g_2, g_3) = 1$ if $\{g_1^{-1}g_2, g_2^{-1}g_3, g_1^{-1}g_3\} \cap S$ contains an odd number of elements, otherwise $c(g_1, g_2, g_3) = -1$.
\end{enumerate}
It is a straightforward case argument to verify that the function $c$ defined above does not violate the cocycle condition.  Similarly, if $g \in comp(T)$ and $t \in T$ satisfy $g \cdot t \in T$ then verifying that $c(t) = c(g \cdot t)$ is a matter of checking cases.  The result now follows from Proposition \ref{circle exponents}.
\end{proof}

\begin{corollary}
A group $G$ is circularly orderable if and only if for every finite subset $X \subset G$ there exists a homomorphism $\phi : \langle X \rangle \rightarrow C$ onto a circularly ordered group such that $\langle X^{-1}X \cap \ker(\phi) \rangle$ is left-orderable.
\end{corollary}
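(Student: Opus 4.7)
The plan is to derive the corollary from Theorem \ref{circ theorem} by observing that an antisymmetric semigroup satisfying the covering condition in that theorem is exactly the kind of object furnished by a positive cone on the subgroup $\langle X^{-1}X \cap \ker(\phi)\rangle$.

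For the forward direction (``only if''), I would simply take $\phi$ to be the identity homomorphism $\langle X\rangle \to \langle X\rangle$, where $\langle X\rangle$ inherits a circular ordering as a subgroup of $G$. Then $\ker(\phi)$ is trivial, so $\langle X^{-1}X\cap\ker(\phi)\rangle$ is the trivial group, which is vacuously left-orderable. (Alternatively one may invoke Lemma \ref{fg circ} to argue that $\langle X\rangle$ itself is circularly orderable.)

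For the backward direction, assume the hypothesis and fix $X\subset G$ finite. Let $\phi:\langle X\rangle\to C$ be the given homomorphism and set $L=\langle X^{-1}X\cap\ker(\phi)\rangle$, which is left-orderable by hypothesis. Choose any left-ordering of $L$ and let $S\subset L$ be its positive cone. Then $S$ is a semigroup, satisfies $S\cap S^{-1}=\emptyset$ (so it is antisymmetric), and gives the disjoint decomposition $L=S\sqcup S^{-1}\sqcup\{id\}$. Since $L\subset\ker(\phi)$, we have $S\subset\ker(\phi)$, and since $X^{-1}X\cap\ker(\phi)\subset L$, the inclusion $X^{-1}X\cap\ker(\phi)\subset S\cup S^{-1}\cup\{id\}$ holds. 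Thus $\phi$ and $S$ together verify the hypotheses of Theorem \ref{circ theorem} for this particular $X$, and applying that theorem gives the desired circular ordering of $G$.

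I do not expect a genuine obstacle here: the entire content of the argument is the identification of a positive cone for a left-ordering on $L$ with the antisymmetric semigroup demanded by Theorem \ref{circ theorem}. If anything, the only point to be careful about is that $\phi$ and $S$ must be produced for \emph{every} finite $X\subset G$ before invoking Theorem \ref{circ theorem}, but this is immediate from running the construction above once per finite subset.
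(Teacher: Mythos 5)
Your proposal is correct and matches the paper's proof, which likewise takes $S$ to be the positive cone of a left-ordering of $\langle X^{-1}X \cap \ker(\phi) \rangle$ and invokes Theorem \ref{circ theorem}; the paper simply states this in one line without spelling out the routine verifications. No further comment is needed.
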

\begin{proof}
If $P$ is the positive cone of a left-ordering of $\langle X^{-1}X \cap \ker(\phi) \rangle$, take $S = P$ and apply Theorem \ref{circ theorem}.
\end{proof}

\begin{corollary}
A group $G$ is circularly orderable if and only if for every finite subset $X \subset G$ there exists a homomorphism $\phi : \langle X \rangle \rightarrow C$ onto a circularly ordered group such that $\phi|_X$ is injective.
\end{corollary}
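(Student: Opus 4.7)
The plan is to obtain this corollary as an immediate consequence of the previous corollary, which asserts that $G$ is circularly orderable if and only if, for every finite $X \subset G$, there exists a homomorphism $\phi:\langle X\rangle \to C$ onto a circularly ordered group such that $\langle X^{-1}X \cap \ker(\phi)\rangle$ is left-orderable.

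The forward direction is trivial: if $G$ is circularly orderable, then for any finite $X \subset G$ the subgroup $\langle X\rangle$ inherits a circular ordering from $G$, and the identity map $\mathrm{id}:\langle X\rangle \to \langle X\rangle$ is injective on $X$, so it serves as the required homomorphism.

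For the converse, the key observation is that injectivity of $\phi$ on $X$ forces $X^{-1}X \cap \ker(\phi) = \{id\}$. Indeed, if $x_1, x_2 \in X$ and $x_1^{-1}x_2 \in \ker(\phi)$, then $\phi(x_1) = \phi(x_2)$, so by injectivity of $\phi|_X$ we have $x_1 = x_2$, whence $x_1^{-1}x_2 = id$. Consequently $\langle X^{-1}X \cap \ker(\phi)\rangle$ is the trivial group, which is vacuously left-orderable. Applying the previous corollary then yields that $G$ is circularly orderable.

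There is no real obstacle here: the corollary is essentially a repackaging of the preceding one under a stronger hypothesis on $\phi$, and the whole content of the argument lies in translating ``$\phi|_X$ injective'' into ``$X^{-1}X \cap \ker(\phi) = \{id\}$.'' (Equivalently, one could appeal directly to Theorem \ref{circ theorem} with the antisymmetric semigroup $S = \emptyset$, but routing through the preceding corollary avoids any ambiguity about whether the empty set qualifies as a semigroup.)
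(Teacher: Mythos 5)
Your proof is correct and amounts to the same argument as the paper's: the paper's own proof just observes that $S = \emptyset$ suffices in Theorem \ref{circ theorem} when $\phi|_X$ is injective, which is exactly what your reduction produces (injectivity gives $X^{-1}X \cap \ker(\phi) = \{id\}$, the trivial group, whose positive cone is empty). Routing through the preceding corollary rather than invoking the theorem directly is an immaterial difference.
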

\begin{proof}
This follows immediately from Theorem \ref{circ theorem}, by noting that $S = \emptyset$ will suffice whenever $\phi|_X$ is injective.
\end{proof}

\bibliographystyle{plain}

\bibliography{BH}

\end{document}